\newtheorem{prop}{Proposition}[section]
\newtheorem{thm}[prop]{Theorem}
\newtheorem{cor}[prop]{Corollary}
\newtheorem{lem}[prop]{Lemma}
\theoremstyle{definition}
\newtheorem{de}[prop]{Definition}
\theoremstyle{remark}
\newtheorem*{remarks}{Remarks}            
\def\CP{{\mathbb C \mathbb P}}
\def\Z{{\mathbb Z}}
\def\R{{\mathbb R}}
\def\rot{{\mathcal R}}
\def\emb{{\hookrightarrow}}
\def\inter{\mathop{\rm int}\nolimits}
\def\id{\mathop{\rm id}\nolimits}
\def\SL{\mathop{\rm SL}\nolimits}
\def\SO{\mathop{\rm SO}\nolimits}
\def\co{\colon\thinspace}
\def\Diff{\mathop{\rm Diff}\nolimits}
\begin{document}
\title{Infinite order corks via handle diagrams}
\author{Robert E. Gompf}
\address{The University of Texas at Austin, Mathematics Department RLM 8.100, Attn: Robert Gompf,
2515 Speedway Stop C1200, Austin, Texas 78712-1202}
\email{gompf@math.utexas.edu}
\begin{abstract} The author recently proved the existence of an infinite order cork: a compact, contractible submanifold $C$ of a 4-manifold and an infinite order diffeomorphism $f$ of $\partial C$ such that cutting out $C$ and regluing it by distinct powers of $f$ yields pairwise nondiffeomorphic manifolds. The present paper exhibits the first handle diagrams of this phenomenon, by translating the earlier proof into this language (for each of the infinitely many corks arising in the first paper). The cork twists in these papers are twists on incompressible tori. We give conditions guaranteeing that such twists do not change the diffeomorphism type of a 4-manifold, partially answering a question from the original paper. We also show that the ``$\delta$-moves" recently introduced by Akbulut are essentially equivalent to torus twists.
\end{abstract}
\maketitle


\section{Introduction}

The failure of high-dimensional topology to apply to smooth 4-manifolds led to the notion of a {\em cork twist}. As originally formulated, this consists of changing the diffeomorphism type of a closed 4-manifold $X$ by removing a compact, contractible, smooth submanifold $C$ from $X$ and regluing it by an involution $f$ of $\partial C$. The first example of a cork twist was published by Akbulut \cite{A} in 1991. A few years later, various authors \cite{CFHS}, \cite{M} showed that any two homeomorphic, simply connected (smooth) 4-manifolds are related by a cork twist. (See \cite{GS}, \cite{InfCork} for more history). The question was immediately raised of whether higher order corks may exist---and in particular, whether there was such a pair $C\subset X$ and an infinite order diffeomorphism $f$ of $\partial C$ such that the $\Z$-indexed family of homeomorphic manifolds obtained by regluing using all powers of $f$ were pairwise nondiffeomorphic. In weaker form, can there even be a contractible 4-manifold $C$ with a boundary diffeomorphism for which no nonzero power extends to a self-diffeomorphism of $C$? No progress was made on these questions until recently. Corks of all finite orders were constructed in 2016 by Tange~\cite{T} and Auckly, Kim, Melvin and Ruberman~\cite{AKMR}. A withdrawn 2014 posting of Akbulut \cite{withdrawn} attempted to construct infinite order corks of the weaker sort, using handle calculus. In 2016 \cite{InfCork}, the author of the present paper constructed an infinite family of examples $C(r,s;m)\subset X$ (for $r,s>0>m$), each one affirmatively answering the stronger question, using an entirely different plan of attack and no handle diagrams. This raised the question of how the paper translates into the language of these diagrams. Section~\ref{Notes} of the present paper gives the result, in a form independent of, but clearly derived from, \cite{InfCork}. This section shows boundary diffeomorphisms of contractible 4-manifolds, and how they can provide infinitely many diffeomorphism types of ambient 4-manifolds, as presented in the currently preferred language of the subject. We also find conditions under which such diffeomorphisms necessarily do extend over a given 4-manifold, partially answering a question from \cite{InfCork}. A recent paper of Akbulut~\cite{Acork} seeks to generalize the twists of Section~\ref{Notes}; our final section shows that his viewpoint is equivalent to ours.

After a quick exposition of the relevant 3-manifold diffeomorphisms in Section~\ref{TorusTwists}, this paper proceeds with three independent sections, beginning with our translation of the proofs in \cite{InfCork} into handle calculus (Section~\ref{Notes}). We sketch the correspondence between the proofs as we go along. We exhibit a cork $C$ by a diagram (Figure~\ref{C}) constructed from the existence proof of \cite{InfCork}, and show it is diffeomorphic to $C(r,s;m)$ as exhibited by Figure~\ref{Crsm}. Then we embed $C$ in a family of larger manifolds $Z_k(r,s;m)$, related (as $k$ varies in $\Z$, with the other variables fixed) by powers of a twist parallel to an incompressible torus in $\partial C$ (Figure~\ref{Z}). Finally, we show that these manifolds embed in a family of closed manifolds $X_k$ related by the same cork twists, and distinguish these using the same method as \cite{InfCork}: We show that they are obtained by the Fintushel-Stern knot construction on elliptic surfaces.

Section~\ref{Fishtail} gives our criterion guaranteeing that cutting and regluing does not change the diffeomorphism type of a 4-manifold. We partially answer a question from \cite{InfCork}: The nontrivial cork twists of that paper were diffeomorphisms of $\partial C(r,s;m)$ twisting along an incompressible torus parallel to its longitude. It was asked whether twisting parallel to the meridian was also nontrivial. We show that the answer is no for a family of manifolds including each $C(r,s;-1)$. Thus, while the potential torus twists are indexed by $H_1(T^2)\cong\Z\oplus \Z$, only one $\Z$-summand is useful for producing exotic 4-manifolds. The question remains open for $m<-1$, but we also see that for the specific embedding $C(r,s;m)\subset X$ used in \cite{InfCork}, only one $\Z$-summand (the longitude) affects the resulting diffeomorphism type. Thus, a nontriviality proof for meridian twisting would at least require a different setup. Coupling our meridian twist criterion with a recent observation of Ray and Ruberman, we see that there are contractible manifolds $C$ for which every boundary diffeomorphism extends over $C$ even though $\partial C$ contains an incompressible torus.

Our final section concerns the recent paper \cite{Acork}. Starting from the preliminary version of Section~\ref{Notes} of the present paper, Akbulut sketched the proof of the simplest case $C(1,1;-1)$. The apparent motivation was to introduce the notion of {\em $\delta$-moves} as an alternative to torus twists. Such moves depend on a choice of auxiliary band (and other data not specified in that paper), so appear to provide additional generality. We show, however, that $\delta$-moves are essentially equivalent to torus twists under broad hypotheses, for example, on irreducible homology spheres (Corollary~\ref{Hsphere}). Since incompressible tori are tightly constrained in 3-manifolds, $\delta$-moves are harder to find than they initially seem to be, and might be most easily located using the well-developed theory of incompressible tori (as Akbulut implicitly did in obtaining the proof he posted from our Section~\ref{Notes}). To obtain our equivalence, we must first address foundational issues such as well-definedness of $\delta$-moves. We also observe technical difficulties that need to be addressed whenever $\delta$-moves (or torus twists) are used for diagrammatically cutting and pasting 4-manifolds. 

\begin{remarks} (a) It is known that a cork twist cannot change the homeomorphism type of a 4-manifold, since every boundary diffeomorphism $f$ of a contractible 4-manifold $C$ extends over it homeomorphically. For a short proof, use $f$ to glue two copies of $C$ along their boundary, obtaining a homotopy 4-sphere that automatically bounds a contractible 5-manifold $W$ (via a smooth h-cobordism to $S^4$, or by working topologically and observing that $\partial W$ is homeomorphic to $S^4$ by Freedman \cite{F}). We can view $W$ as a topological h-cobordism of $C$ with a fixed product structure over $\partial C$ realizing $f$. Freedman's h-Cobordism Theorem \cite{F} extends the product structure, and projecting to $C$ extends $f$.

(b) Some authors require corks to be Stein domains by definition. This seems to be an entirely separate issue from that of changing diffeomorphism types by twisting, although Akbulut and Matveyev \cite{AM} showed that corks, in the original sense where $f$ is an involution, can always be modified to admit Stein structures. The author has made no attempt to address the Stein condition in this paper or its predecessor. It remains an interesting question whether any of these corks are (or can be modified to be) Stein domains.
\end{remarks}

We work in the smooth category throughout the paper. For simplicity, we assume (unless otherwise indicated) that all 3-manifolds are orientable and closed, and all 4-manifolds are orientable and compact (allowing boundary).


\section{Torus twists}\label{TorusTwists}

We begin with a quick exposition of the 3-manifold diffeomorphisms that will be central to this paper. Let $T\subset M$ be a torus embedded in a 3-manifold.  Identify a tubular neighborhood of $T$ with $S^1\times S^1\times I$, and let $\alpha$ and $\beta$ denote $S^1\times\{\zeta\}\times\{0\}$ and $\{\zeta\}\times S^1\times\{0\}$, respectively, for some $\zeta\in S^1$.

\begin{de}\label{Ttwist} The {\em torus twist} on $T$ parallel to $\alpha$ is the diffeomorphism $f\co M\to M$ obtained from $f(\theta,\phi,t)=(\theta+2\pi t, \phi,t)$ by extending as the identity on the rest of $M$ and smoothing.
\end{de}

\noindent Informally, we cross a Dehn twist on the annulus $\alpha\times I$ with the identity on $\beta$. This is a well-known, classical diffeomorphism of $M$, sometimes called a {\em Dehn twist} on $T$. Since $T$ can be identified with $S^1\times S^1$ so that $\alpha$ represents any preassigned primitive homology class of $T$, every element of $H_1(T)$ determines some power of a torus twist. More formally, $T$ is contained, by Lie group multiplication, in the group $\Diff_+(T)$ of orientation-preserving self-diffeomorphisms of $T$, inducing an isomorphism $\pi_1(T)\to\pi_1(\Diff_+(T))$. This descends by torus twisting to a homomorphism $\Z\oplus\Z\cong\pi_1(T)\to\pi_0(\Diff_+(M))$. For example, when $M$ is a torus bundle, all self-diffeomorphisms fixing one fiber pointwise arise by twisting on another fiber. For most irreducible 3-manifolds $M$, torus twists on all incompressible tori together generate the group $\pi_0(\Diff_+(M))$ up to a finite extension. (See the last corollary of Waldhausen \cite{W}.) When $T$ is compressible and $M$ is irreducible, $T$ either lies in a ball (bounded by the compressed torus) or bounds a solid torus over which the action of $T$ extends, so all twists on $T$ are isotopic to the identity. Note that irreducibility is necessary: For a connected sum, we expect nontrivial slide diffeomorphisms constructed by dragging the site of the sum around a loop $\gamma$ in one summand. Such a diffeomorphism can be described as a twist about the compressible torus bounding a tubular neighborhood of $\gamma$.

We can similarly define twists on Klein bottles. We will see in Section~\ref{Twist} that these are less useful than torus twists, but we introduce them for completing the discussion there of $\delta$-moves. If $K\subset M$ is an embedded Klein bottle, we identify $K$ as a bundle over a circle $\beta$ with fiber $\alpha$. The previous description can still be applied over intervals in $\beta$, since the monodromy around $\beta$ reverses orientations of both $\alpha$ and $I$ (by orientability of $M$), hence, commutes with the Dehn twist. 

Both kinds of twists have a convenient surgery description. First, draw a framed link diagram of $M$ so that $\alpha$ is an unknot in the ambient $S^3$ with the torus or Klein bottle $T$ inducing the 0-framing. Such a diagram can be obtained from an arbitrary diagram, in which $\alpha$ appears as a framed knot, by blowing up to change some crossings and adjust its framing. (One can also simultaneously control $\beta$ so that the two curves bound disks in the ambient $S^3$ with disjoint interiors.) To realize the twist, blow up a $\pm 1$-framed curve $\gamma$ at $\alpha$, slide it around $T$ as in Figure~\ref{alpha} until it returns to its original position, and blow it back down. Any curve intersecting $T$ must slide over $\gamma$ as it passes, creating the required twist. To verify that this gives a well-defined diffeomorphism, it is not necessary to see that $T$ is embedded, only that $\gamma$ returns to its original position after sliding around $M$ (to allow conjugation by the diffeomorphism from $M$ to its blowup.)

\begin{figure}
\labellist
\small\hair 2pt
\pinlabel $\alpha$ at 28 95
\pinlabel $\beta$ at 136 123
\pinlabel $\gamma\to$ at 33 46
\pinlabel $\gamma\to$ at 266 12
\pinlabel $-1$ at 84 100
\pinlabel $\gamma\to$ at 33 46
\pinlabel $T$ at 132 55
\endlabellist
\centering
\includegraphics{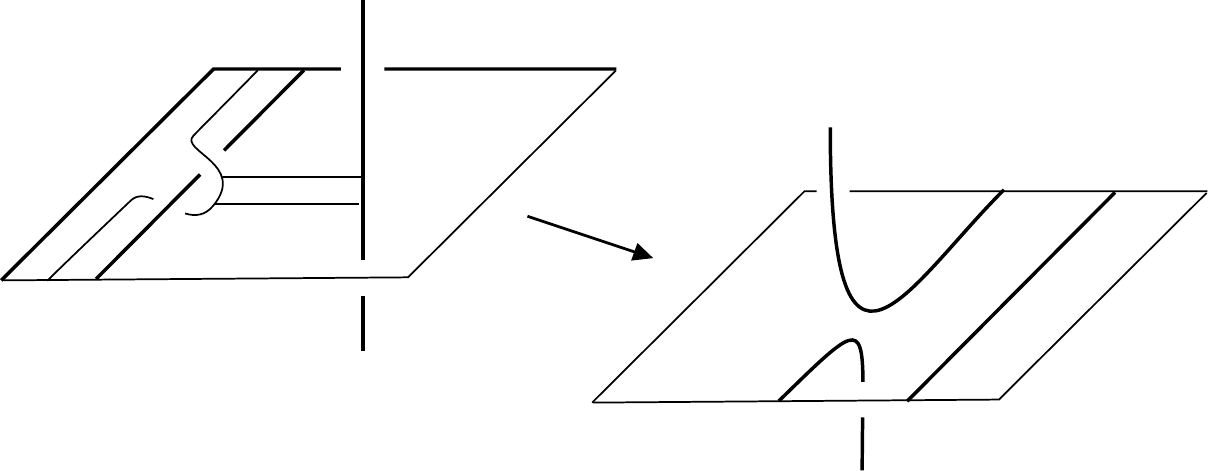}
\caption{Torus twisting.}
\label{alpha}
\end{figure}

\section{Diagrams of Corks}\label{Notes}

\begin{figure}
\labellist
\small\hair 2pt
\pinlabel $-s$ at 19 41
\pinlabel $r$ at 85 41
\endlabellist
\centering
\includegraphics{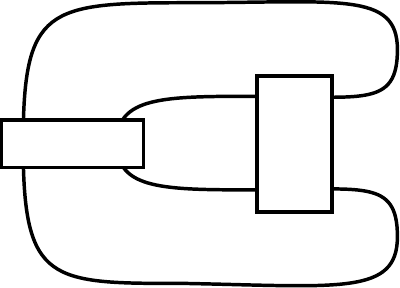}
\caption{The double twist knot $\kappa(r,-s)$.}
\label{knot}
\end{figure}

The first examples of infinite order corks were constructed in \cite{InfCork}. Let $E(r,s)$ denote the complement of the double twist knot $\kappa(r,-s)$ shown in Figure~\ref{knot} (where the boxes count full twists). The manifold $C(r,s;m)$ obtained from $I\times E(r,s)$ by adding a 2-handle along an $m$-framed meridian in $\{1\}\times E(r,s)$ is contractible. Its boundary has an obvious incompressible torus $T$, namely $\{0\}\times\partial E(r,s)$. (In fact there is a pair of incompressible tori, exhibited by moving the 2-handle to the middle level $\frac12$ and taking the tori at levels 0, 1. These are interchanged by an orientation-reversing symmetry of the construction, and they are only parallel when $|m|=1$.) Let $f\co\partial C(r,s;m)\to \partial C(r,s;m)$ be the torus twist on $T$ parallel to the longitude $\lambda$ of $T$ (the curve bounding a Seifert surface in $\{0\}\times E(r,s)$). In \cite{InfCork} it was shown that for fixed $r,s,n>0>m$, there is a canonical embedding $C(r,s;m)\emb X'_0=E(n)\# (r+s-m-3)\overline{\CP^2}$ into a blown up elliptic surface. It was also shown that the manifold $X'_k$ obtained from $X'_0$ by cutting out $C(r,s;m)$ and regluing it using $f^k$ is the corresponding blowup of the manifold $X_k$ obtained from $E(n)$ by the Fintushel-Stern construction on the knot $\kappa(k,-1)$. (The blowups can usually be avoided.) Since these latter manifolds for $k\in\Z$ are known to be pairwise nondiffeomorphic \cite{FS}, \cite{FS2}, each choice of $r,s>0>m$ yields an infinite order cork. In particular, the self-diffeomorphisms $f^k$ are not related to each other by any self-diffeomorphism of the cork, or to the identity unless $k=0$. (This last sentence applies to a larger range of $r,s,m$, due to the obvious symmetries $C(-r,-s;m)=C(r,s;m)=C(s,r;m)$ and the reflection reversing the sign of $m$. However, it is crucial that $rs>0$, ie, the twists in Figure~\ref{knot} have opposite handedness. Otherwise, $C(1,-1;-1)$ is a counterexample; see Corollary~\ref{noncork} and its preceding discussion.) The existence proof of an infinite order cork producing the family $\{ X_k\}$ did not use any handle diagrams, and recognizing the corks required only some simple 3-dimensional surgery. Since it seems useful to understand the proofs using diagrams, we now provide their translations. The resulting proofs are independent of \cite{InfCork} and almost entirely handle-theoretic, but seem unlikely to have been conceived without benefit of the abstract version.

\begin{figure}
\labellist
\small\hair 2pt
\pinlabel $-s$ at 19 88
\pinlabel $s$ at 19 38
\pinlabel $r$ at 80 88
\pinlabel $-r$ at 80 38
\pinlabel $m$ at 84 136
\pinlabel $-s$ at 215 88
\pinlabel $s$ at 215 38
\pinlabel $r$ at 276 88
\pinlabel $-r$ at 276 38
\pinlabel $m$ at 280 136
\pinlabel $0$ at 245 52
\endlabellist
\centering
\includegraphics{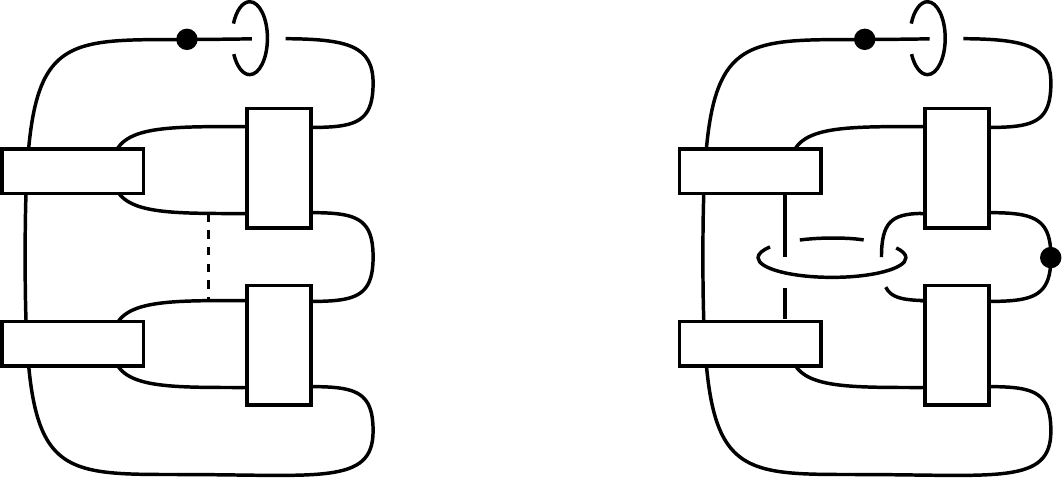}
\caption{The cork $C(r,s;m)$.}
\label{Crsm}
\end{figure}

To draw diagrams of the manifolds in \cite{InfCork}, we need diagrams of products of various 3-manifolds with $I$ and $S^1$. We use a method that was pioneered by Akbulut and Kirby, eg \cite{AK}, \cite{AkScharl}. A detailed exposition is given in \cite[Section 6.2]{GS}, particularly the solved Exercise~6.2.5(b) (and Example~4.6.8 for products with $S^1$). We illustrate with the diagrams of $C(r,s;m)$ in Figure~\ref{Crsm}. Ignoring the $m$-framed meridian in each diagram, we obtain $I\times E(r,s)$. To understand the resulting diagram on the left, consider the horizontal  plane of reflection. Adding its point at infinity and then thickening, we obtain an embedding of $I\times S^2$ in $S^3$ that represents the lateral boundary of $I\times B^3$. The upper and lower complementary 3-balls in the diagram represent $\{1\}\times B^3$ and $\{0\}\times B^3$, respectively. The dotted knot $\kappa(r,-s)\#-\kappa(r,-s)$ in the diagram is the boundary of the ribbon disk $I\times \kappa(r,-s)\subset I\times B^3$ (where we interpret the knot as a tangle in $B^3$). This disk, which can also be interpreted as the half-spin of the tangle in $B^4$, should be deleted from $B^4$ to obtain $I\times E(r,s)$ (as the dot indicates). The dashed arc represents the ribbon move transforming the dotted knot into a 2-component unlink, exhibiting the ribbon disk with a pair of local minima and a saddle point. If we use this decomposition to build a handle diagram of the complement, we obtain the diagram on the right, with the dotted unlink representing the pair of local minima and the 0-framed 2-handle arising when the saddle is submerged into the 4-ball. The $m$-framed meridian in each diagram is the 2-handle specified in the definition of $C(r,s;m)$. To see the torus $T$ in the left diagram, note that the bisecting 2-sphere in $S^3$ intersects the knot in two points. Remove these intersections by ambient surgery, using a tube comprising the boundary of a tubular neighborhood of the lower knot $-\kappa(r,-s)=\kappa(-r,s)$. The resulting torus in $\partial C(r,s;m)$ is $T=\{0\}\times\partial E(r,s)$ (seen in the boundary orientation inherited from $C(r,s;m)$). This torus is also visible in the right picture, running twice over the 0-framed 2-handle (corresponding to the two intersections of $T$ with the dashed arc in the left picture).

\begin{figure}
\labellist
\small\hair 2pt
\pinlabel $-s$ at 85 108
\pinlabel $-s$ at 80 123
\pinlabel $-r$ at 144 133
\pinlabel $-r$ at 178 139
\pinlabel $m$ at 118 44
\pinlabel $0$ at 71 39
\pinlabel $0$ at 146 39
\endlabellist
\centering
\includegraphics{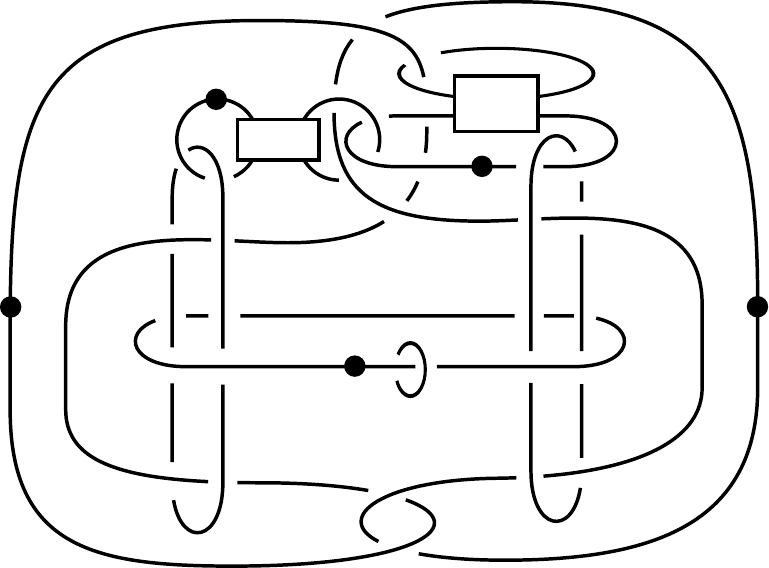}
\caption{The cork $C\approx C(r,s;m)$.}
\label{C}
\end{figure}

To exhibit infinite order corks, we need a very different description of $C(r,s;m)$ and its boundary diffeomorphism. The 4-manifold $C$ shown in Figure~\ref{C} arises from the proof of the main theorem of \cite{InfCork}. We discuss how this figure arises and then show that $C$ is diffeomorphic to $C(r,s;m)$. (This entire discussion could be excised to leave a complete but mysterious proof that $C$ is an infinite order cork. Note that it is obviously contractible, being simply connected with Euler characteristic 1.) In \cite{InfCork}, the cork $C$ was constructed from $Y=I\times\Sigma\times S^1$, where $\Sigma$ is a punctured torus, by adding three 2-handles and then drilling out the cores of two of them (connected by annuli to the far boundary of the product with $I$). If we modify Figure~\ref{C} by removing all four circles passing through twist boxes, as well as the $m$-framed meridian, what remains is this $Y$. We can see this by unwinding the two large dotted circles from each other, but it is more instructive to view the picture as a product with $S^1$: Starting from a trivial proper embedding $\Sigma\subset B^3$ with $\lambda=\partial \Sigma$ on the equator of $\partial B^3$, we obtain its tubular neighborhood $I\times\Sigma$ as the complement of a clasped pair of arcs, with the boundary of each in a single hemisphere. By the method used in the previous paragraph, this picture becomes the clasp of the two large dotted circles in the top center of the figure, and its mirror image at the bottom. Thus, we have $I\times\Sigma\times I$ exhibited as a 0-handle and two 1-handles. The algorithm for changing a product with $I$ to a product with $S^1$ introduces a $(k+1)$-handle for each $k$-handle of the original diagram. In this case, we obtain a new 1-handle (the central dotted circle) and the two 0-framed 2-handles. We can think of the 1-handle as connecting the top and bottom boundaries $I\times\Sigma\times \{0,1\}$ to each other, and each 2-handle connects a meridian of a dotted circle (essentially the core of the 1-handle) with its mirror image on the other boundary component.

We complete the analysis of Figure~\ref{C} by restoring the remaining curves to get $C$. At each twist box, we have a rationally canceling handle pair that represents a 2-handle added along a generator of $\Sigma$, with its core drilled out (the dotted circle). The $m$-framed circle represents the undrilled handle attached to  a product circle. Note that the diagram can be simplified by canceling the $m$-framed meridian, and when $r=s=1$, there is further cancellation at the twist boxes. (When $m=-1$ also, Figure~1 of Akbulut's recent preprint \cite{Acork} shows the result.)  According to the construction in \cite{InfCork}, the cork twist on $C$ is a twist on the torus $T=\{0\}\times\partial \Sigma\times S^1$ parallel to $\lambda=\{0\}\times\partial\Sigma\times\{\theta\}$. Interpreting $\lambda$ as the equator of $\partial B^3$ as before, we draw it as in Figure~\ref{T0}. (It encircles the clasp in $I\times \Sigma\times I$, but is drawn at one side to make room for the additional handles of $Y$.) Since the dual curve $\mu\subset T$ is a product circle in $Y$, the pair appears as in that same figure. Thickening these to annuli using the 0-framing, we obtain a punctured torus $T_0$ whose union with an embedded disk is $T$. We will verify directly from the diagrams that $\delta=\partial T_0$ bounds an embedded disk in $\partial C$ with interior disjoint from $T_0$, and that the $\lambda$-twist on the resulting torus has the required properties, but first we identify $C$:

\begin{figure}
\labellist
\small\hair 2pt
\pinlabel $-s$ at 95 108
\pinlabel $-s$ at 90 123
\pinlabel $-r$ at 154 133
\pinlabel $-r$ at 188 139
\pinlabel $m$ at 128 44
\pinlabel $0$ at 81 39
\pinlabel $0$ at 156 39
\pinlabel $\mu$ at 50 81
\pinlabel $\lambda$ at 0 86
\endlabellist
\centering
\includegraphics{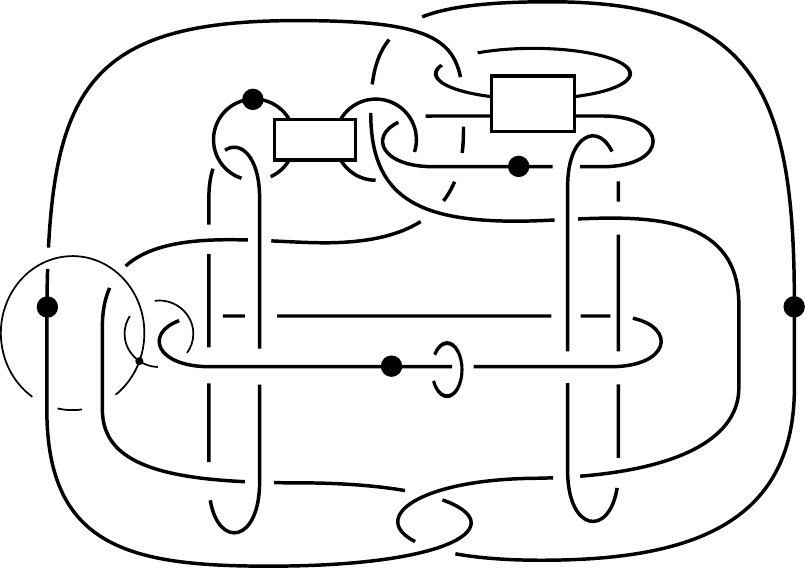}
\caption{The punctured torus $T_0$ in $\partial C$ is obtained by 0-framed thickening of $\lambda\cup\mu$. Its boundary is $\delta=\partial T_0$.}
\label{T0}
\end{figure}

\begin{prop} The 4-manifold $C$ in Figure~\ref{C} is diffeomorphic to $C(r,s;m)$ in Figure~\ref{Crsm}.
\end{prop}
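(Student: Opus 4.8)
The plan is to prove the diffeomorphism purely by Kirby calculus, converting the handle diagram of $C$ in Figure~\ref{C} into one of the two equivalent diagrams of $C(r,s;m)$ in Figure~\ref{Crsm}. Both manifolds are presented as a single $0$-handle together with $1$-handles (drawn as dotted circles) and $2$-handles (framed knots), so the entire content is to produce a sequence of handle slides, $1$--$2$ handle cancellations, and, if necessary, blow-ups and blow-downs carrying the first diagram to the second. I would aim at the right-hand diagram of Figure~\ref{Crsm}, since it shares with Figure~\ref{C} the structural feature of dotted circles paired with $0$-framed $2$-handles and is therefore the most directly comparable of the two presentations.

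I would organize the computation around the decomposition already supplied in the text. Deleting the four twist-box circles and the $m$-framed meridian from Figure~\ref{C} exhibits $Y=I\times\Sigma\times S^1$ as a $0$-handle, the two $1$-handles of the clasp coming from $I\times\Sigma\times I$, and then the extra $1$-handle (central dotted circle) and two $0$-framed $2$-handles produced by the product-with-$S^1$ algorithm. The first reductions are to unwind the two large clasped dotted circles from each other, as noted in the discussion of $Y$, and to cancel the resulting redundant $1$--$2$ handle pairs. In particular, the two $0$-framed $2$-handles of $C$ must be reduced to the single $0$-framed saddle handle of Figure~\ref{Crsm}, which I expect to accomplish by canceling one of them against the central $1$-handle (or one of the clasp $1$-handles) while sliding the surviving handles and recording the induced framing changes.

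The crux is to reconcile the twisting data. In Figure~\ref{C} the four twist boxes carry the \emph{repeated} labels $-s,-s,-r,-r$, reflecting the origin of $C$ in $\Sigma\times S^1$, where each of the two generators of $\Sigma$ is twisted and then doubled by the top and bottom halves of the $I$-factor. By contrast, $C(r,s;m)$ carries the \emph{signed} pattern $-s,s,r,-r$ of the connected sum $\kappa(r,-s)\#-\kappa(r,-s)$ of the double twist knot of Figure~\ref{knot} with its mirror. I expect this redistribution to be the main obstacle: I would slide the twist-box handle pairs through the disentangled clasp and across the central $1$-handle, and I anticipate that the top--bottom mirror symmetry of the $I$-product---the two clasp halves being mirror images---is what ultimately supplies the opposite-handed twist regions, converting the repeated labels $-s,-s,-r,-r$ into the signed pattern $-s,s,r,-r$ of $\kappa(r,-s)\#-\kappa(r,-s)$. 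The surviving $0$-framed $2$-handle should then appear as the saddle $2$-handle exhibiting the ribbon disk of this connected sum, and the $m$-framed $2$-handle on its product circle matches the defining $m$-framed meridian of $C(r,s;m)$ directly.

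Throughout, the delicate points are the validity of each $1$--$2$ handle cancellation---requiring the relevant $2$-handle to run geometrically once over the dotted circle it cancels---and the correct propagation of framings and linking numbers through every handle slide. These are mechanical once the global picture of the twist redistribution and the reduction from two $0$-framed handles to one are in hand, and I would conclude by checking that the fully reduced diagram agrees on the nose with the right-hand diagram of Figure~\ref{Crsm} (one further cancellation of the saddle against a dotted component yielding the left-hand diagram, if desired).
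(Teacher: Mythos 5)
Your overall strategy---reduce Figure~\ref{C} to the right-hand diagram of Figure~\ref{Crsm} by slides and cancellations, beginning by unwinding the clasp of the two large dotted circles---is exactly the paper's route, but your plan has a genuine gap at precisely the step you identify as the crux, and the mechanism you anticipate there is not the one that works. In the paper's proof, the conversion of the repeated labels $-s,-s,-r,-r$ into the signed pattern $-s,s,r,-r$ does \emph{not} come from the top--bottom mirror symmetry of the $I$-product. It comes from framing bookkeeping during double $1$-handle slides: two strands of a large dotted circle are slid across a $0$-framed parallel copy of a small dotted circle, and since that parallel copy passes through a \emph{negative} twist box, restoring its $0$-framing forces the insertion of a compensating \emph{positive} twist box (Figures~\ref{5} and~\ref{6}). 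That is the only source of the $+s$ and $+r$ boxes; no symmetry argument produces them, and without this (or an equally explicit) mechanism the sign redistribution remains an expectation rather than a proof.

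Two further steps that your outline does not account for. First, the $-s$-framed $2$-handle must eventually be cancelled, but a dotted circle cannot be slid over a $2$-handle; the paper resolves this by introducing a fresh cancelling $1$-$2$ pair, double-sliding the new $0$-framed $2$-handle, and only then cancelling (Figures~\ref{7} and~\ref{8}). Note in particular that \emph{both} original $0$-framed $2$-handles of Figure~\ref{C} are cancelled early in the computation, and the $0$-framed saddle handle in the final diagram is this newly introduced handle---not a survivor, as your plan assumes when you propose to cancel ``one of them'' and keep the other. Second, the last step of the paper's proof is neither a slide nor a cancellation but a twist-box transfer across a dotted circle, realized by repeated blow-ups and blow-downs (cf.\ Figure~\ref{1h}); this converts a $-r$ framing to $0$ and enables the final cancellation. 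Finally, a standing verification your plan omits: throughout such a computation one must check that the dotted circles remain an unlink, so that the $1$-handle slides are legitimate in the classical sense and no nontrivial dotted ribbon links appear.
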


Note that this gives an independent check that $C$ is made from a double twist knot whose twists have opposite handedness. In \cite{InfCork}, this handedness was determined by a delicate inspection of 3-manifold orientations.

\begin{proof} Starting from Figure~\ref{C}, unwind the clasps of the two large dotted circles by moving the leftmost strand in the diagram through the large right circle and back to its place. After shortening the two 0-framed curves by an isotopy, we obtain Figure~\ref{5}. Next, we perform two double 1-handle slides as indicated. That is, each arrow represents two strands of a dotted circle being slid across another one. Throughout this proof, the link consisting of all dotted circles will be an unlink, so we are sliding 1-handles in the classical sense (ie,\ no nontrivial dotted ribbon links appear). We encounter a notational technicality: We slide using 0-framed parallel copies of the small dotted circles. These pass through negative twist boxes, so to restore the 0-framings we must add compensating positive twist boxes. The result, after the two obvious handle pair cancellations, is Figure~\ref{6}. Next, simplify the two large dotted curves by pulling the clasps through all twist boxes as indicated by the arrows, dragging along the dotted circle with the $m$-framed meridian. Raise the lowermost strand of the latter dotted circle so that it is positioned between the $-s$-framed 2-handle and the $\pm r$ twist boxes, then eliminate its self-crossing by flipping over the clasp running through the $\pm r$ twist boxes. The rightmost dotted circle can then be shrunk into the middle of the figure, which should then be recognizable as Figure~\ref{6half}. We next wish to cancel the $-s$-framed 2-handle. Since we cannot slide a dotted circle over a 2-handle, we first introduce a canceling 1-2 pair as in Figure~\ref{7}, then double slide the new 2-handle as indicated and cancel the $-s$-framed handle, obtaining Figure~\ref{8}. (Note that after the double slide, the new 2-handle initially runs twice through the $-s$-twist box, consistent with sliding over a $-s$-framed curve. However, we can immediately pull it down through the twist box to its position in Figure~\ref{8}.) Finally, slide the $-r$-twist box across the circle with the rightmost dot. This is a standard 3-manifold move, cf.\ Figure~\ref{1h} in Section~\ref{Twist}, obtained by repeatedly blowing up a $+1$-framed curve encircling the twist box, sliding it across the dotted circle, and blowing it back down. One way to interpret this move 4-dimensionally is to think of the dotted circles as representing $\# 3\thinspace S^1\times S^2$, containing a framed link. Perform the move on this 3-manifold, then uniquely fill in the 4-manifold $\natural 3\thinspace S^1\times D^3$. This move changes the framing on one 2-handle from $-r$ to 0, and that handle immediately cancels a dotted circle. The result is isotopic to the right-hand diagram in Figure~\ref{Crsm}.
\end{proof}

\begin{figure}
\labellist
\small\hair 2pt
\pinlabel $-s$ at 85 91
\pinlabel $-s$ at 80 106
\pinlabel $-r$ at 144 116
\pinlabel $-r$ at 178 122
\pinlabel $m$ at 118 10
\pinlabel $0$ at 69 67
\pinlabel $0$ at 146 71
\endlabellist
\centering
\includegraphics{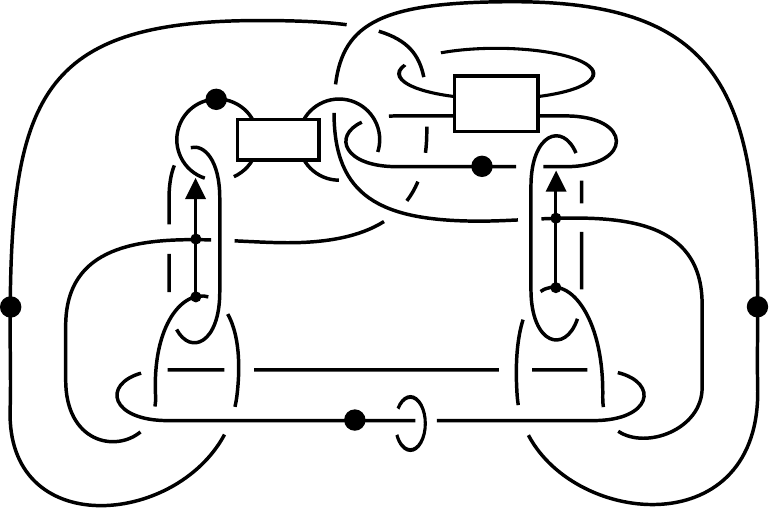}
\caption{Double 1-handle slides on a diagram of $C$ obtained from Figure~\ref{C} by isotopy.}
\label{5}
\end{figure}

\begin{figure}
\labellist
\small\hair 2pt
\pinlabel $-s$ at 77 85
\pinlabel $-s$ at 91 101
\pinlabel $s$ at 74 69
\pinlabel $r$ at 137 90
\pinlabel $-r$ at 159 95
\pinlabel $-r$ at 182 115
\pinlabel $m$ at 118 2
\endlabellist
\centering
\includegraphics{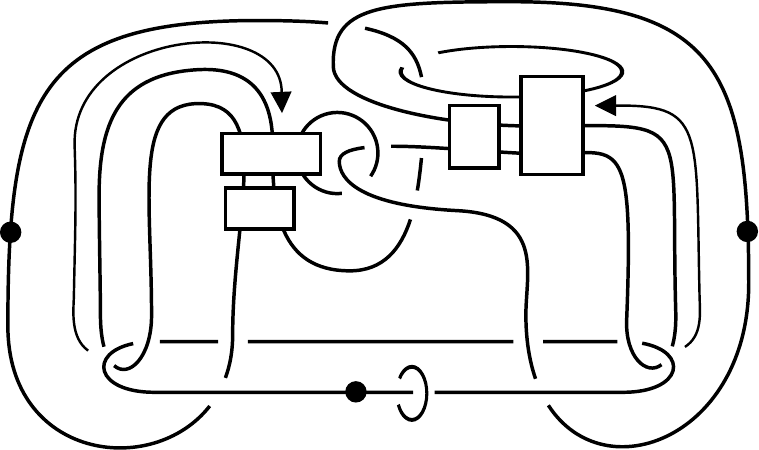}
\caption{Further simplifying $C$.}
\label{6}
\end{figure}

\begin{figure}
\labellist
\small\hair 2pt
\pinlabel $-s$ at 52 52
\pinlabel $-s$ at 68 37
\pinlabel $s$ at 49 36
\pinlabel $r$ at 131 62
\pinlabel $-r$ at 152 65
\pinlabel $-r$ at 197 84
\pinlabel $m$ at 132 29
\endlabellist
\centering
\includegraphics{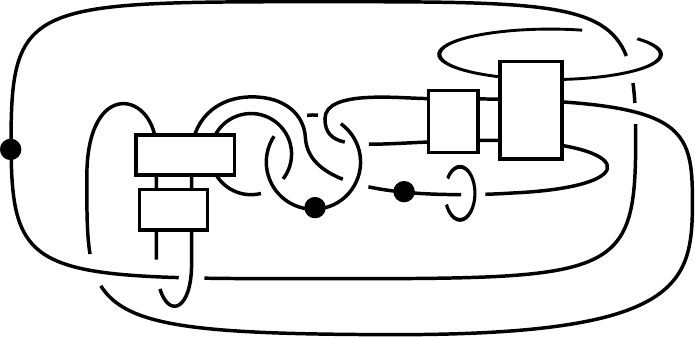}
\caption{An isotopic simplification of Figure~\ref{6}.}
\label{6half}
\end{figure}

\begin{figure}
\labellist
\small\hair 2pt
\pinlabel $-s$ at 52 60
\pinlabel $-s$ at 68 45
\pinlabel $s$ at 49 44
\pinlabel $r$ at 131 70
\pinlabel $-r$ at 152 73
\pinlabel $-r$ at 197 92
\pinlabel $m$ at 128 2
\pinlabel $0$ at 97 87
\endlabellist
\centering
\includegraphics{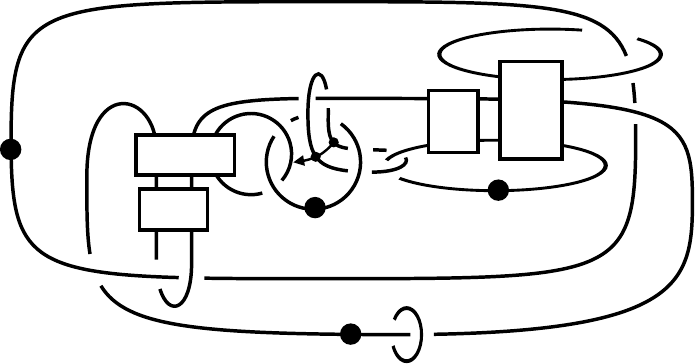}
\caption{A new 1-2 pair and a double slide.}
\label{7}
\end{figure}

\begin{figure}
\labellist
\small\hair 2pt
\pinlabel $-s$ at 25 69
\pinlabel $s$ at 26 41
\pinlabel $r$ at 106 73
\pinlabel $-r$ at 132 75
\pinlabel $-r$ at 173 92
\pinlabel $m$ at 102 2
\pinlabel $0$ at 57 67
\endlabellist
\centering
\includegraphics{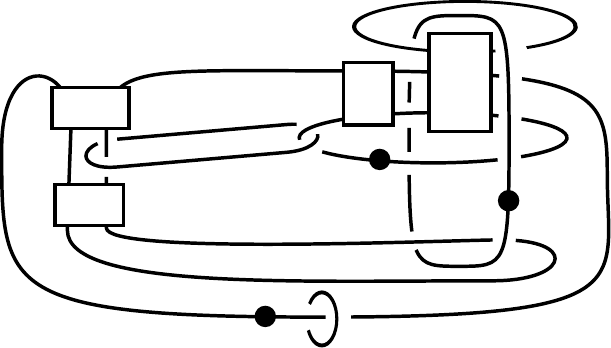}
\caption{Transferring $-r$ twists yields $C(r,s;m)$.}
\label{8}
\end{figure}

\begin{prop}\label{fk} The circle $\delta=\partial T_0\subset\partial C$ in Figures~\ref{T0} and~\ref{Z} bounds a disk $D$ in $\partial C$ with interior disjoint from $T_0$. The resulting torus twist parallel to $\lambda$ changes $k$ by 1 in Figure~\ref{Z} while otherwise preserving all curves in the figure (and their orientations).
\end{prop}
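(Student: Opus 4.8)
The plan is to verify both assertions directly from the handle diagrams, using the surgery description of torus twists from Section~\ref{TorusTwists}. Recall that the $\lambda$-twist on $T$ is realized by blowing up a $\pm1$-framed curve $\gamma$ at $\lambda$, sliding $\gamma$ once around the torus $T=T_0\cup D$ as in Figure~\ref{alpha}, and blowing it back down; any curve meeting $T$ must slide over $\gamma$ as it passes, so the net effect on a given curve is governed entirely by its geometric intersections with $T$, each transverse intersection splicing in a copy of $\lambda$. Thus the two parts of the statement reduce to (a) producing the disk $D$ so that $T$ is a genuine embedded torus, and (b) reading off the intersections of the curves in Figure~\ref{Z} with $T$.

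For part (a), note that $\lambda$ and $\mu$ meet transversally in a single point, being a longitude-meridian pair for $T$, so their thickening $T_0$ is a once-punctured torus. I would first check from Figure~\ref{T0} that the $0$-framings used to form the two annuli are exactly the framings $\lambda$ and $\mu$ inherit from the embedded torus $T=\{0\}\times\partial\Sigma\times S^1$; granting this, $T_0$ is a subsurface of $T$ and the complement $D=T\smallsetminus\inter(T_0)$ is an embedded disk with $\partial D=\delta$ and $\inter(D)$ disjoint from $T_0$. Embeddedness of $D$ in $\partial C$ then follows from embeddedness of $T$ itself, which is already visible in the diagram (the torus running twice over each $0$-framed $2$-handle, as in the identification of $C(r,s;m)$ in the preceding proposition).

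For part (b), I would locate $T=T_0\cup D$ inside the diagram of Figure~\ref{Z} and determine which of the added handle-curves cross it. Because the twist is parallel to $\lambda$, a curve meeting $T$ transversally with algebraic intersection number $n$ acquires $n$ copies of $\lambda$, while curves disjoint from $T$ are untouched. The claim to verify is that, up to isotopy and algebraically cancelling intersections, a single essential transverse intersection with $T$ survives, carried by the one curve along which the index is recorded; sliding it over $\gamma$ splices in one copy of $\lambda$, advancing $k$ to $k+1$. As in the proof of the preceding proposition, the slide over parallel $0$-framed copies will force compensating twist boxes to restore framings, after which one simplifies; the assertion is that this simplification returns the diagram to its original form with $k$ replaced by $k+1$ and every other curve, together with its orientation, left unchanged. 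A check of the orientation conventions then confirms that the twist increments rather than decrements $k$.

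The main obstacle is the bookkeeping in part (b). One must correctly identify the complete intersection pattern of the $Z_k$-handles with the embedded torus $T$, carry out the slides over $\gamma$ together with the framing corrections, and verify that after simplification nothing survives except the clean shift $k\mapsto k+1$. This is delicate precisely because the statement demands that \emph{every} other curve be restored exactly, so any stray copy of $\lambda$ picked up by an auxiliary curve must be shown either to cancel against an oppositely oriented intersection or to be removable by isotopy and handle slides.
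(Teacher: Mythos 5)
Your overall framework---realize the $\lambda$-twist by blowing up a $\pm1$-framed curve at $\lambda$, sliding it around $T$, and blowing it back down, so that the effect on any curve is governed by its intersections with $T=T_0\cup D$---is sound, and in fact parallels what the author calls his ``first diagrammatic proof.'' But there is a genuine gap at exactly the point you defer as ``bookkeeping'': you give no method for locating the disk $D$, and hence the whole torus $T$, inside Figure~\ref{Z}, and therefore no way to certify the complete intersection pattern of the fine curves with $T$. Your $D$ comes from the abstract identification $T=\{0\}\times\partial\Sigma\times S^1$; such a disk is invisible in the diagram, and if its interior meets $\zeta_i$, $\eta_i$, $\rho$, $\sigma$ or the $m$-framed meridian, the twist will drag those curves in ways the diagram does not show. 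This is precisely the pitfall the paper flags before its proof: Figure~\ref{disk} exhibits a disk disjoint from the fine curves but possibly not from $\inter T_0$, while making a disk disjoint from $\inter T_0$ by 3-manifold theory could create intersections with the fine curves, and neither alone proves the proposition. Moreover, your expectation that ``a single essential transverse intersection with $T$ survives, carried by the one curve along which the index is recorded'' is not what happens: two fine curves, $\rho$ and $\sigma$, cross $T$, both get wrapped parallel to $\lambda$, and the wrapping is recorded in the two twist boxes $\pm k$.

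The paper closes this gap with a device your proposal lacks: set $k=0$ and drag $T_0$ \emph{together with all the fine curves} through the handle computation of the preceding proposition (the diffeomorphism $C\approx C(r,s;m)$). The result, Figure~\ref{TinC}, displays $T$ completely explicitly---the horizontal symmetry plane of the $C(r,s;m)$ diagram, surgered along a tube---so the disk $D$ and every intersection of $T$ with the fine curves can be read off: only $\rho$ and $\sigma$ meet $T$. One computes the twist there and transports the answer back to Figure~\ref{Z}, where undoing the wrapping by isotopy restores the twist boxes to the values $\pm k$. If you wish to avoid that computation, the author's alternative (blow up a $-1$-framed unknot encircling the $-k$-box, slide it around using moves as in Figure~\ref{disk}, and blow it back down) does yield a well-defined diffeomorphism with the right effect on $k$; but, as the paper notes, that method does not exhibit the implicitly described torus as embedded, so it cannot establish the first sentence of the proposition as stated.
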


\begin{figure}
\labellist
\small\hair 2pt
\pinlabel $\eta_2$ at 269 277
\pinlabel $k$ at 39 178
\pinlabel $-k$ at 39 69
\pinlabel $-s$ at 148 247
\pinlabel $-s$ at 163 270
\pinlabel $-r$ at 247 262
\pinlabel $-r$ at 286 291
\pinlabel $\eta_1$ at 147 225
\pinlabel $\zeta_1$ at 85 252
\pinlabel $\zeta_2$ at 330 249
\pinlabel $m$ at 223 150
\pinlabel $+1$ at 190 21
\pinlabel $\rho$ at 317 131
\pinlabel $\sigma$ at 190 87
\pinlabel $\lambda$ at 9 125
\pinlabel $\mu$ at 79 174
\pinlabel $T_0$ at 66 134
\pinlabel $0$ at 131 68
\pinlabel $0$ at 255 68
\endlabellist
\centering
\includegraphics{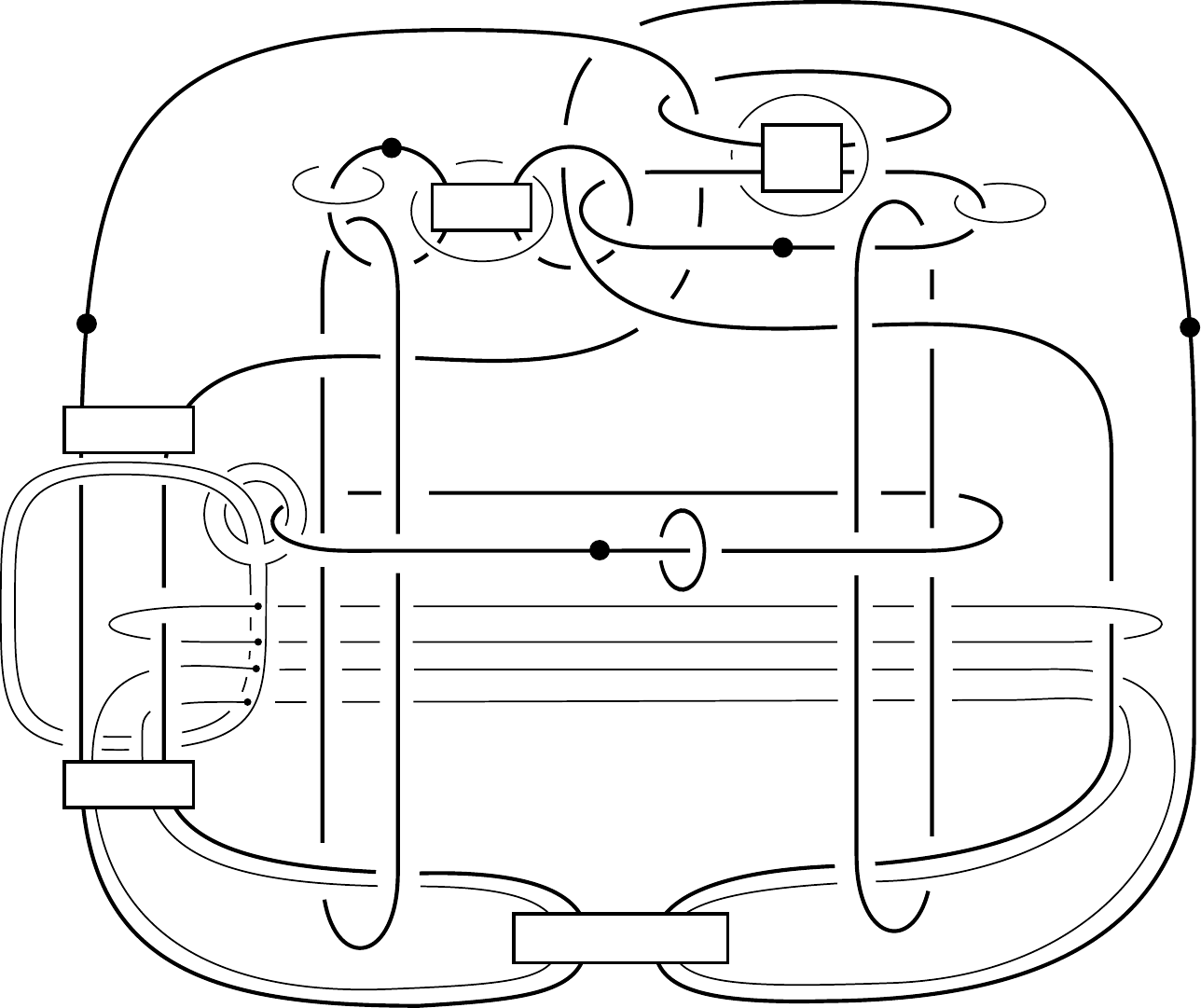}
\caption{The 4-manifold $Z_k(r,s;m)$ is obtained from $C(r,s;m)$ by adding 0-framed 2-handles along $\zeta_1$, $\zeta_2$ and $\rho$, then a 3-handle (and ignoring the other fine curves). Note that some curves intersect the punctured torus $T_0$, which is drawn explicitly as a thickening of the wedge $\lambda\cup\mu$. Its boundary $\delta$ is unlabeled.}
\label{Z}
\end{figure}

\begin{figure}
\labellist
\small\hair 2pt
\pinlabel $-s$ at 92 143
\pinlabel $-r$ at 156 153
\pinlabel $m$ at 140 72
\pinlabel $0$ at 82 102
\pinlabel $0$ at 158 104
\pinlabel $\delta_1$ at -7 110
\pinlabel $\delta_2$ at 67 166
\endlabellist
\centering
\includegraphics{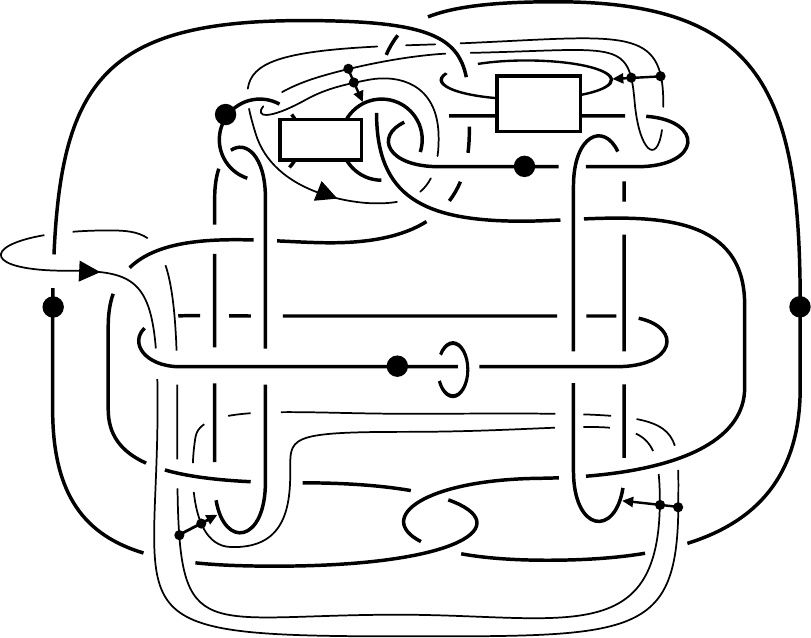}
\caption{A proof that $\delta$ bounds a disk disjoint from the fine curves of Figure~\ref{Z}.}
\label{disk}
\end{figure}

There are various approaches to the proof. A simple way to exhibit $D$ disjoint from the fine curves is by handle sliding $\delta$ to get an unknot, as in Figure~\ref{disk}. (Although this is unnecessary for proving our main theorem, an interested reader can verify that $\delta$ is isotopic to $\delta_1$. Two double handle slides change this to $\delta_2$, and two more yield an unknot. The boundary orientation of $\delta$ induced by the counterclockwise orientation of $T_0$ in Figure~\ref{Z} is shown as an aid.) We can make $D$ disjoint from $\inter T_0$ by 3-manifold theory (cf.\ proof of Theorem~\ref{equivalent}), but this could create intersections with the fine curves, resulting in their unexpected movement during the torus twist (cf.\ last paragraph of Section~\ref{Twist}.) This issue can presumably be dealt with, but we instead prove the theorem definitively with a direct approach.

\begin{proof} Set $k=0$, and drag $T_0$ and all auxiliary curves in Figure~\ref{Z} simultaneously through the computation of the previous proof. This is routine but tedious; details are left to the intrepid reader. (One can treat $T_0$ as a framed wedge of circles, as long as its intersections with the auxiliary curves are handled carefully. These intersections will eventually be dragged through the $-s$-twist box, in the downward direction. Note that the curves $\eta_i$ will remain closely encircling the negative twist boxes, so need not be carefully tracked; the curves $\zeta_i$ will be similarly rooted to the positive twist boxes as soon as these boxes appear.) The result is Figure~\ref{TinC}, where the curves $\eta_i$ and $\zeta_i$ encircling the twist boxes are suppressed, and $T_0$ is the obvious 0-framed thickening of $\lambda\cup\mu$. (We have drawn the thickening near where $T_0$ intersects the other curves.) We see that $T_0$ is as originally described in $C(r,s;m)$, with $\lambda$ the longitude of $\kappa(-r,s)$ and $\mu$ a meridian, and the disk $D$ easily visualized. The remaining curves can be explicitly seen to avoid $T$ except for the original intersections of $T$ with $\rho$ and $\sigma$. The torus twist $f$ wraps these curves parallel to $\lambda$ at the intersections, and fixes all curves elsewhere. When we transport this description back to Figure~\ref{Z} with $k=0$, we can undo the wrapping caused by $f^k$ by an isotopy that restores the twist boxes to their original values $\pm k$.
\end{proof}

\begin{figure}
\labellist
\small\hair 2pt
\pinlabel $-r-s$ at 115 162
\pinlabel $\rho$ at 80 158
\pinlabel $\sigma$ at 192 141
\pinlabel $m$ at 199 124
\pinlabel $\mu$ at 4 100
\pinlabel $\lambda$ at 96 101
\pinlabel $-s$ at 51 133
\pinlabel $r$ at 142 131
\pinlabel $s$ at 48 55
\pinlabel $-r$ at 142 57
\endlabellist
\centering
\includegraphics{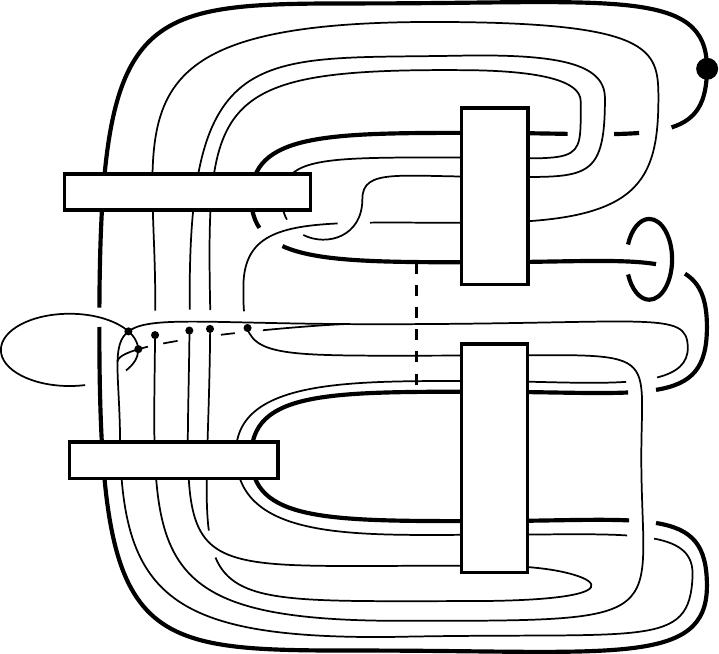}
\caption{A complete picture of the torus $T\subset\partial C$, with the fine curves of Figure~\ref{Z}. The generating circles of $T$ are $\mu$ and $\lambda$, the latter of which has been partially thickened in $T$ to show the intersections of $T$ with the fine curves. The rest of $T$ is given by the horizontal plane of symmetry (and point at infinity), surgered by a tube following the lower half of the dotted circle so that $T$ contains  $\mu$ and $\lambda$.}
\label{TinC}
\end{figure}

Figure~\ref{Z} was drawn so that the case $r=s=-m=1$ exhibits Akbulut's ``$\delta$-curve" \cite[Figure~4]{Acork} as the boundary of $T_0$. We will show in Section~\ref{Twist} that under broad hypotheses, every $\delta$-curve arises from a torus in this manner. The author's first diagrammatic proof used a different approach: Blow up a $(-1)$-framed unknot parallel to $\lambda$ in the lower half of Figure~\ref{Z} (encircling the $-k$-twist box), slide it around $T$ to get a curve encircling the $+k$-twist box using a diagram similar to Figure~\ref{disk}, then blow it back down. While this is the same torus twist (cf.\ also Figure~\ref{alpha} and its discussion in the last paragraph of Section~\ref{TorusTwists}), it is less clear from this method that the implicitly described torus is embedded. However, the diffeomorphism is still well-defined by this procedure and has the required properties. This easier method is strong enough for all of our subsequent discussion, since we do not need to exhibit the diffeomorphism as a twist on an embedded torus.

Now consider the 4-manifold obtained from $C$ by attaching 0-framed 2-handles along the fine curves $\zeta_1$, $\zeta_2$ and $\rho$ shown in Figure~\ref{Z}. We will show that its boundary contains a nonseparating 2-sphere. Add a 3-handle along this sphere and call the result $Z_k(r,s;m)$. This is independent of the choice of 2-sphere (by Trace \cite{Tr}, for example), and canonically contains a copy of $C$. Another picture of $Z_k(r,s;m)$ is given by Figure~\ref{Z2}, where we have switched to dotted ribbon knot notation and canceled the new 2-handles $\zeta_1$, $\zeta_2$ and $\rho$ against 1-handles. (Ignore the curve $\sigma$ in Figure~\ref{Z2} but include the other fine curves, which come from 2-handles in Figure~\ref{Z}, and the 3-handle.) Canceling $\rho$ has joined the two large dotted circles, forming the knot $K_k\#-K_k$ where $K_k$ is the twist knot $\kappa(k,-1)$. This dotted ribbon knot represents $I\times E(k,1)$, cf.\ Figure~\ref{Crsm}. (The comparison with $I\times E(r,s)\subset C(r,s;m)$ is superficial.) Without the fine curves, Figure~\ref{Z2} represents the manifold $W_k=S^1\times E(k,1)$. (Each handle of $I\times E(k,1)$ generates an additional handle of index higher by one in $W_k$, cf.\ Figure~\ref{C}, with the canceled 2-handle $\rho$ generating the 3-handle.) The fine curves in Figure~\ref{Z2}, two of which are isotopic, represent the product $S^1$ ($m$-framed), meridians of $K_k$ ($-r$-framed  and $-s$-framed), and its longitude $\sigma$ in $\{0\}\times E(k,1)$. Unlike previously, this recognition of $W_k$ is crucial to our proof, so we provide a check:

\begin{figure}
\labellist
\small\hair 2pt
\pinlabel $-s$ at 77 160
\pinlabel $-r$ at 143 160
\pinlabel $m$ at 127 77
\pinlabel $0$ at 72 111
\pinlabel $0$ at 145 111
\pinlabel $k$ at 13 113
\pinlabel $-k$ at 14 33
\pinlabel $\sigma$ at 195 61
\pinlabel $+1$ at 108 13
\pinlabel{ $\cup$ 3-handle} at 226 8
\endlabellist
\centering
\includegraphics{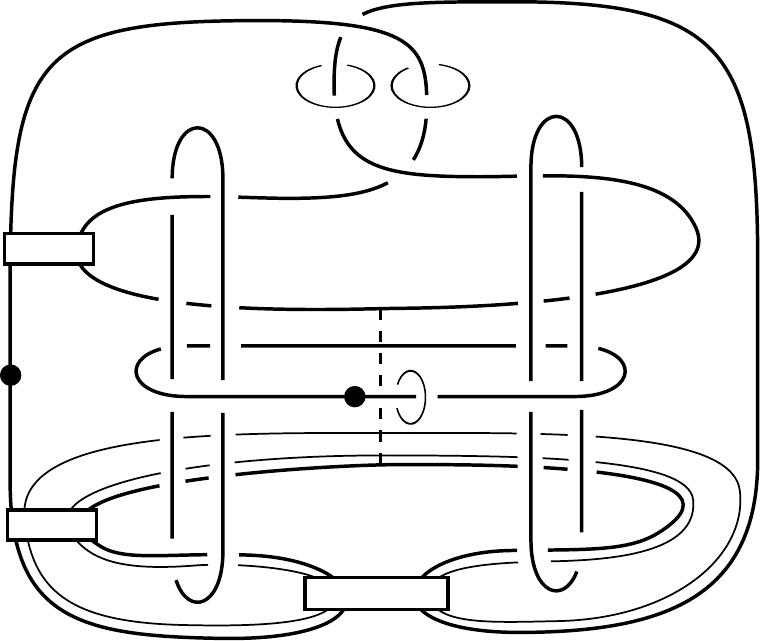}
\caption{Another picture of $Z_k(r,s;m)$, with extra curve $\sigma$.}
\label{Z2}
\end{figure}

\begin{prop}\label{Wk} The manifolds $Z_k(r,s;m)$ and $W_k$ given by the diagrams are well-defined (ie, the relevant 3-manifolds have a nonseparating sphere for the 3-handle). The 3-manifolds $\partial W_k$ are all diffeomorphic, preserving the fine curves of Figure~\ref{Z2} and their orientations. The 4-manifold $W_0$ is diffeomorphic to $T^2\times D^2$, with $\sigma$ bounding the essential disk, and the $-r$- and $m$-framed curves arising as factors of a product decomposition $T^2=S^1\times S^1$.
\end{prop}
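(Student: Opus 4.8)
The plan is to deduce all three assertions from a single recognition step: that Figure~\ref{Z2}, stripped of its fine curves, is literally the standard Akbulut--Kirby diagram of the product $S^1\times E(k,1)$. First I would carry out the promised check. The dotted ribbon knot $K_k\#-K_k$ presents $I\times E(k,1)$ by a $0$-handle, two $1$-handles and one $0$-framed $2$-handle, exactly as in the discussion of Figure~\ref{Crsm}. The product-with-$S^1$ algorithm of \cite[Section 6.2, Example 4.6.8]{GS} adjoins one new handle of index one higher for each of these: the new central $1$-handle, the two new $0$-framed $2$-handles, and the capping $3$-handle (generated, in the presentation of Figure~\ref{Z2}, by the canceled handle $\rho$). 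Matching these curve by curve against Figure~\ref{Z2} identifies $W_k$ with $S^1\times E(k,1)$ and identifies the top handle as the standard product $3$-handle.

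For Part 1 the content is precisely that this product $3$-handle is attachable, i.e.\ that the boundary $Y_k$ of the index-$\le 2$ handlebody contains a nonseparating $2$-sphere. I would exhibit it as the sphere the product construction always produces: the belt sphere of the central $1$-handle, which is nonseparating because that $1$-handle contributes an $S^1\times S^2$-summand to $Y_k$ that the $3$-handle then caps off, recovering the honest product. Independence of the choice of sphere is supplied by \cite{Tr}. For $Z_k$ the only extra data are the fine-curve $2$-handles (the $m$-, $-r$- and $-s$-framed curves); I would check that this sphere can be made disjoint from them, so that it persists into the surgered $3$-manifold and stays nonseparating, giving the \emph{same} $3$-handle and hence simultaneous well-definedness of $Z_k$.

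Parts 2 and 3 are then read off from the product structure. For Part 2, $\partial W_k=S^1\times\partial E(k,1)=S^1\times T^2=T^3$, and the three fine curves are the product circle, a meridian $\mu$, and the longitude $\lambda$ of $K_k$. Since every knot carries the same marked peripheral torus $(\partial E(k,1),\mu,\lambda)$, crossing the canonical marked diffeomorphism $\partial E(k,1)\to\partial E(k',1)$ with $\id_{S^1}$ yields a diffeomorphism $\partial W_k\to\partial W_{k'}$ carrying fine curves to fine curves and preserving orientations, independent of $k$. For Part 3 I would note that $\kappa(0,-1)$ is the unknot, so $E(0,1)=S^1\times D^2$ and $W_0=S^1\times(S^1\times D^2)=T^2\times D^2$. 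The longitude $\sigma$ bounds the Seifert disk of the unknot, which is the meridian disk of the solid torus $E(0,1)$; hence $\sigma$ bounds $\{pt\}\times D^2$, the essential disk of $T^2\times D^2$. The $m$-framed product circle is the first $S^1$-factor, while the $-r$-framed meridian of the unknot is the core of the solid torus $E(0,1)$, i.e.\ the second $S^1$-factor, giving the asserted decomposition $T^2=S^1\times S^1$.

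The main obstacle is Part 1: converting ``it is a product, so the $3$-handle is standard'' into an honest nonseparating sphere in the correct $3$-manifold, and verifying that it survives the addition of the fine-curve handles of $Z_k$ while remaining nonseparating. The surrounding diagram-chasing---in particular tracking framings through the twist boxes as the product algorithm is applied---is routine but error-prone, and is where I would expect to spend the most care.
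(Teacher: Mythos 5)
Your Parts 2 and 3 would follow correctly once the product structure is in hand, but Part 1 contains a genuine error. The nonseparating sphere is \emph{not} the belt sphere of the central 1-handle. In the product-with-$S^1$ algorithm, the two new 0-framed 2-handles (the ``tall'' unknots of Figure~\ref{Z2}) arise from the 1-handles of $I\times E(k,1)$, and their attaching circles necessarily run over the new central 1-handle---that is exactly how they connect each meridian in $\{1\}\times E(k,1)$ to its mirror image in $\{0\}\times E(k,1)$ (see the discussion of Figure~\ref{C}). Consequently the belt sphere of the central 1-handle meets these two attaching circles, so it does not survive as an embedded sphere in the boundary of the index-$\le 2$ handlebody, and that 1-handle does \emph{not} contribute an $S^1\times S^2$-summand there. (Worse, in Figure~\ref{Z2} the central dotted circle is not even present: it has been canceled against $\rho$, which is why the two large dotted circles have merged into the ribbon knot $K_k\#-K_k$.) The sphere you actually need is the attaching sphere of the product 3-handle, namely two parallel copies of the core of the 3-dimensional 2-handle of $E(k,1)$ joined by an annulus running around the $S^1$ direction; diagrammatically, the paper exhibits it by handle moves: after unclasping via a blown-up $+1$-framed unknot (Figure~\ref{unclasp}) and an isotopy, the two tall 0-framed curves become parallel, one slides over the other to become a \emph{split} 0-framed unknot, and that split unknot is what produces the $S^1\times S^2$-summand and hence the sphere.

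A secondary problem is that your ``recognition step'' assumes precisely what the proposition was written to verify. The paper says, just above the statement, that ``this recognition of $W_k$ is crucial to our proof, so we provide a check''; its proof is deliberately an independent Kirby-calculus verification---first identifying all $\partial W_k$ (and $\partial Z_k(r,s;m)$) with the $k=0$ case by blow-ups/downs and a twist canceling the $\pm k$ boxes, then reducing the $k=0$ diagram to the Borromean-rings picture of $T^2\times D^2$ with fine meridians (Figure~\ref{W0})---rather than a re-run of the construction that produced the figure. Declaring the curve-by-curve matching ``routine'' leaves all of the actual content unverified; to make your route rigorous you would have to execute the algorithm of \cite[Section 6.2]{GS} on a handle decomposition of $E(k,1)$ and confirm, tracking clasps and framings, that its output is Figure~\ref{Z2}, which is essentially the same amount of diagram work the paper does, run in the opposite direction. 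Granting that step, your peripheral-torus argument for $\partial W_k\cong\partial W_0$ preserving the marked curves, and the observation that $K_0$ is the unknot so $W_0=S^1\times(S^1\times D^2)=T^2\times D^2$ with $\sigma$ bounding the essential disk, are correct and give a clean structural alternative to the paper's diagrammatic identification---but as written they inherit both gaps above.
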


\begin{proof} Interpreting Figure~\ref{Z2} as a 3-manifold (ignoring the 3-handle), we can eliminate the clasps from the dotted ribbon knot by blowing up a $+1$-framed unknot as in Figure~\ref{unclasp}, sliding this unknot over the tall unknots as shown, and blowing back down. We can now cancel the $\pm k$ twist boxes by twisting one tall unknot $k$ times about its long axis. This identifies each $\partial W_k$ with $\partial W_0$ (and similarly for $Z_k(r,s;m)$) and reduces well-definedness to the $k=0$ case. Now consider Figure~\ref{Z2} to be a 4-manifold. When $k=0$ the $\pm k$-twist boxes can be erased, so we can pull the outer strand of $\sigma$ through the $+1$-twist box and unwind the outer strands of the large dotted circle as in Figure~\ref{W0}. To get this figure, we also swing both tall curves to the inner rear of the large dotted circle. They are then parallel, so one can be slid over the other to become a 0-framed unknot unlinked from the rest of the diagram. This exhibits the nonseparating sphere in $\partial W_k$. Canceling this unknot with the 3-handle, we obtain Figure~\ref{W0}, which is the Borromean rings with  fine meridians of each component. This has the required interpretation.
\end{proof}

\begin{figure}
\labellist
\small\hair 2pt
\pinlabel $-s$ at 75 177
\pinlabel $-r$ at 143 177
\pinlabel $m$ at 120 77
\pinlabel $0$ at 72 111
\pinlabel $0$ at 145 114
\pinlabel $k$ at 13 113
\pinlabel $-k$ at 14 33
\pinlabel $\sigma$ at 195 61
\pinlabel $+1$ at 108 13
\pinlabel $0$ at 225 101
\pinlabel $0$ at 184 83
\pinlabel $+1$ at 80 163
\endlabellist
\centering
\includegraphics{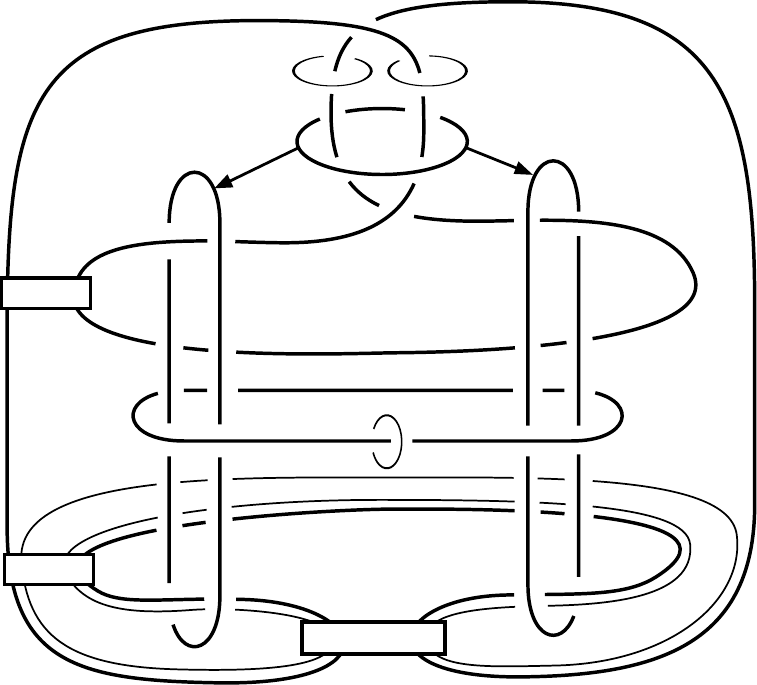}
\caption{Simplifying $\partial W_k$.}
\label{unclasp}
\end{figure}

\begin{figure}
\labellist
\small\hair 2pt
\pinlabel $-s$ at 76 160
\pinlabel $-r$ at 144 160
\pinlabel $0$ at 133 114
\pinlabel $m$ at 89 60
\pinlabel $\sigma$ at 195 53
\endlabellist
\centering
\includegraphics{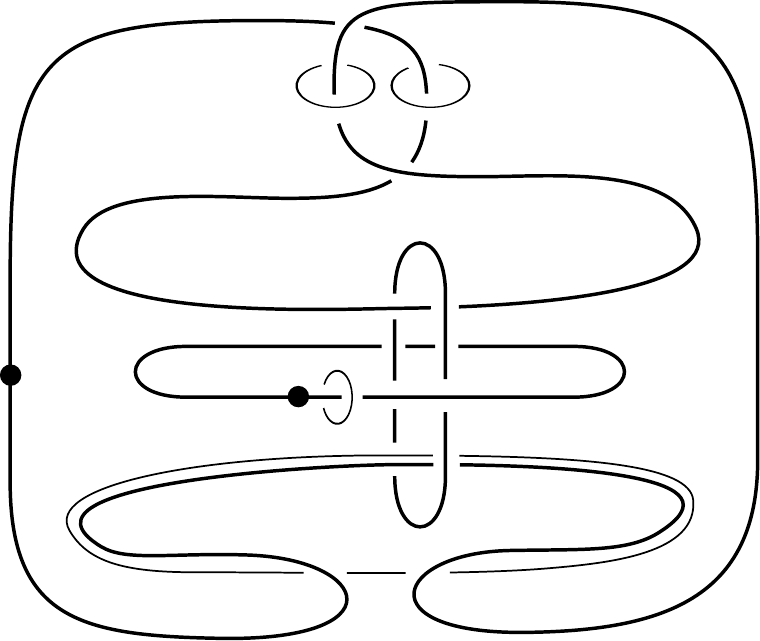}
\caption{Identifying $W_0$ as $T^2\times D^2$.}
\label{W0}
\end{figure}

We can now identify the manifolds $Z_k(1,1;-1)$ for all $k\in\Z$. First, $Z_0(r,s;m)$ is obtained from $W_0=T^2\times D^2$ by adding three 2-handles along embedded circles in copies of $T^2\times\{p\}$, as shown in Figure~\ref{elliptic}. (The 1-handles and 0-framed 2-handle exhibit $T^2\times D^2$ so that the trivial torus bundle structure on its boundary is easily visible.) When $r=s=1$ and $m=-1$, this diagram is a well-known description of an elliptic fibration over a disk, a cusp neighborhood with an extra vanishing cycle, eg, \cite[Section 8.2]{GS}. Thus, $Z_0(1,1;-1)$ naturally embeds in the elliptic surface $E(n)$ for any fixed $n>0$, and is easily seen in link diagrams of the latter. The curve $\sigma$ is a section of the induced torus bundle structure on $\partial Z_0(1,1;-1)$ (as can again be seen in Figure~\ref{elliptic}, cf.\ \cite{GS}). For general $k$, $Z_k(r,s;m)$ is obtained from $Z_0(r,s;m)$ by replacing $W_0=T^2\times D^2$ by $W_k$, preserving the longitude $\sigma$.   This is precisely the Fintushel-Stern knot construction, using the knot $K_k$, and (when $r=s=1$ and $m=-1$) using a regular fiber of the elliptic fibration on $Z_0(1,1;-1)$.

\begin{figure}
\labellist
\small\hair 2pt
\pinlabel $-s$ at 37 50
\pinlabel $-r$ at 37 81
\pinlabel $m$ at 73 92
\pinlabel $\sigma$ at 139 30
\pinlabel $0$ at 117 119
\endlabellist
\centering
\includegraphics{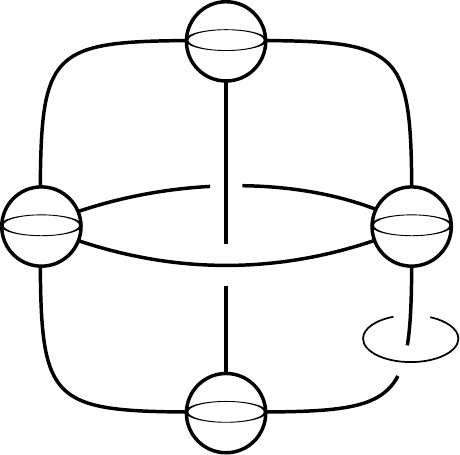}
\caption{$Z_0(r,s;m)$ showing elliptic fibration of $Z_0(1,1;-1)$ with section $\sigma$.}
\label{elliptic}
\end{figure}

To prove our main theorem, we need one last routine lemma:

\begin{lem}\label{Zdiff} A self-diffeomorphism $\varphi$ of the pair  $(\partial Z_0(1,1;-1),\sigma)$ that preserves the orientation of $\sigma$ must be isotopic to the identity (through self-maps of the pair).
\end{lem}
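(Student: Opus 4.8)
The plan is to prove Lemma~\ref{Zdiff} by reducing it to a mapping-class-group computation for a torus bundle carrying a preferred section, and then killing the residual ambiguity by a base rotation that moves along the section. First I would record the structure of $N:=\partial Z_0(1,1;-1)$ extracted from Proposition~\ref{Wk} and Figure~\ref{elliptic}. Since $Z_0(1,1;-1)$ is the total space of an elliptic fibration over $D^2$ (a cusp neighborhood with one extra vanishing cycle), obtained from $W_0=T^2\times D^2$ by attaching three vanishing-cycle $2$--handles, its boundary $N$ fibers over $\partial D^2=S^1$ with regular fiber a torus $F\cong T^2$, and the monodromy $A\in\SL_2(\Z)$ is the product of the three vanishing-cycle Dehn twists. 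One computes that $A$ has finite (even) order, so $N$ is a flat Seifert-fibered manifold with $b_1=1$, the fiber $F$ is incompressible, and $\pi_1(F)\cong\Z\oplus\Z$ is characteristic in $\pi_1(N)$. The section $\sigma$ is the curve $\{p\}\times S^1\subset T^2\times\partial D^2\subset\partial(T^2\times D^2)$ for a point $p\in T^2$ missing all three attaching circles; consequently every twist fixes $p$, so $A(p)=p$ and $\sigma$ is the constant section through this fixed point, meeting each fiber once.

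Now take $\varphi$ as in the statement, which I take to be orientation-preserving on $N$ (it arises from an orientation-preserving identification; an orientation-reversing self-map fixing $\sigma$ orientedly could never be isotopic to the identity, so this is implicit in the conclusion). Because $N$ is Haken and fibers over $S^1$ with incompressible torus fiber, and because $\varphi_*$ preserves the characteristic fiber subgroup, Waldhausen's theorem \cite{W} lets me isotope $\varphi$ to a fiber-preserving diffeomorphism. Since it fixes the oriented curve $\sigma$ it covers an orientation-preserving self-map of the base, which I further isotope to cover $\mathrm{id}_{S^1}$. Thus $\varphi$ becomes a family of fiber automorphisms $\varphi_t\co (F_t,p)\to(F_t,p)$ whose common homological action $B\in\GL_2(\Z)$ (constant by continuity) satisfies $BA=AB$, and $\det B=+1$ by orientation-preservation on $N$ and on the base. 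Hence $B$ lies in the centralizer of the finite-order element $A$ in $\SL_2(\Z)$, which for this $A$ is the cyclic group $\langle A\rangle$ (in particular $-\mathrm{id}\in\langle A\rangle$), so $B=A^{j}$ for some $j$.

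The core observation is that this residual ambiguity is killed by the section itself. The base rotation $\psi_u(x,t)=(x,t+u)$ is an isotopy of $N$ from the identity at $u=0$ to the monodromy bundle-map $\hat A$ at $u=1$ (inducing $A$ on each fiber); crucially, each $\psi_u$ carries $\sigma=\{p\}\times S^1$ to itself preserving its orientation, since it merely rotates $\sigma$ along itself. Composing $\varphi$ with $\psi_{-j}$ therefore stays within self-maps of the pair and reduces to $B=\mathrm{id}$. Each $\varphi_t$ then fixes $p$ and acts trivially on $H_1(F_t)$, so it represents the trivial class of $\mathrm{MCG}(T^2,p)\cong\GL_2(\Z)$ and is isotopic to $\mathrm{id}_{F_t}$ rel $p$; assembling these isotopies over $t\in S^1$ gives an isotopy of $\varphi$ to the identity through fiber-preserving maps fixing $\sigma$.

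I expect the main obstacle to be the second step: upgrading the bare Waldhausen conclusion to one compatible with the pair $(N,\sigma)$, i.e.\ carrying out the reduction to fiber-preserving form through maps that preserve $\sigma$ rather than merely through maps of $N$. This should follow from a relative (fibered) version of Waldhausen's theorem applied to the knot complement $N\setminus\nu\sigma$, which is itself the mapping torus of the restriction of $A$ to the once-punctured fiber, or from an argument isotoping the incompressible fiber into fibered position disjointly from $\sigma$. A secondary technical point is the family version in the final step: one must choose the fiberwise isotopies $\varphi_t\simeq\mathrm{id}$ continuously in $t$, which amounts to the vanishing of an obstruction in $\pi_1$ of the identity component of $\Diff(T^2,p)$ and is routine, since point-pushing on the torus is isotopically trivial.
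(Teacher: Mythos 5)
Your proposal is correct in outline, and its skeleton is exactly the paper's: identify $\partial Z_0(1,1;-1)$ as a torus bundle whose monodromy $A$ is the $\pi/2$-rotation in $\SL(2,\Z)$ (order four), with $\sigma$ the section through its fixed point; isotope $\varphi$ to a fiber-preserving map covering the identity on the base; note that the centralizer of $A$ in $\SL(2,\Z)$ is $\langle A\rangle$, so $\varphi$ acts fiberwise as $A^j$; and absorb $A^j$ by the full base rotation, which is admissible precisely because it slides $\sigma$ along itself. The genuine difference is in the one step that constitutes the bulk of the paper's proof: reaching fiber-preserving position \emph{through maps of the pair} $(N,\sigma)$, which is where the hypothesis on $\sigma$ actually enters (without it, fiber twists are counterexamples, as the paper notes). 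The paper does this step by hand: it makes $\varphi(F)$ transverse to a fiber $F$, removes trivial intersection circles using irreducibility together with the observation that $\sigma$ meets the relevant ball in an unknotted arc (proved via the universal cover), and removes essential circles by noting that $\pm1$ is not an eigenvalue of $A$, so no annulus of $\varphi(F)$ can surject onto the base; this yields a nullhomologous, hence compressible, torus avoiding $\sigma$, across whose solid torus the intersections are cancelled. Only after $\varphi(F)\cap F=\emptyset$ does it invoke Waldhausen \cite[Proposition~3.1]{Wald}, applied to the complement of $\sigma$ in $F\times I$. You instead outsource this entire step to a ``relative fibered Waldhausen theorem'' for the knot complement $N\setminus\nu\sigma$, a once-punctured-torus bundle. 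That route can be made to work: $b_1(N\setminus\nu\sigma)=1$, so the fibration class is unique up to sign; your claim that the fiber subgroup is characteristic is correct; and preserving the orientation of $\sigma$ pins down the sign. But be aware that the statement you are citing (a diffeomorphism preserving the fibration class is isotopic to a fiber-preserving one) is itself a Stallings--Waldhausen argument rather than a bare appeal to homotopy-implies-isotopy, i.e., it is essentially the work the paper performs explicitly; and you still owe the (routine) check that the isotopy produced in the complement extends across $\nu\sigma$ to an isotopy of the pair, using that the fibration restricts on $\partial\nu\sigma$ to the fibration by meridians of the solid torus. The trade-off: the paper's version is self-contained and keeps $\sigma$ visibly under control at every stage, while yours is shorter and cleaner at the price of pushing the real content into the quoted relative theorem. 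Your two closing technical remarks---that the relative upgrade is the main obstacle, and that assembling the fiberwise isotopies over the base circle is unobstructed because the identity component of $\Diff(T^2,p)$ is (weakly) contractible---are both on target, and your reading of the orientation convention is consistent with how the lemma is used in the paper; indeed, the same centralizer computation run in $\GL(2,\Z)$ shows no orientation-reversing map of the pair can exist, since every element of $\Z[A]$ has determinant $a^2+b^2\ge 0$.
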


The control of $\sigma$ is necessary, in order to rule out twists on fiber tori. The proof rules out horizontal tori.

\begin{proof} We can identify each fiber of the torus bundle with $\R^2/\Z^2$ so that $\sigma$ is zero in each fiber. Then the monodromy is an element $A\in\SL(2,\Z)$. If we use the obvious basis for $\R^2$ in Figure~\ref{elliptic}, then $A$ is given by $\pi/2$ rotation. (This is both well-known and routine to verify in the figure. See also \cite{GS}.) Choose a fiber $F$ and assume its image $\varphi(F)$ is transverse to it. Since $F$ is incompressible, each circle of intersection is trivial in $F$ if and only if it is trivial in $\varphi(F)$. Each innermost circle in $\varphi(F)$ also bounds a disk in $F$. The two disks together bound a ball $B$. If one disk intersects $\sigma$ (necessarily in a unique point), then so does the other, and $\sigma\cap B$ is an unknotted arc in $B$. (Otherwise, the complement of a lift of $\sigma$ to the universal cover of $\partial Z_0(1,1;-1)$ would have nonabelian fundamental group.) Either way, we can eliminate trivial circles by isotoping $\varphi$ pairwise until all circles (if any remain) are essential. These must be parallel to each other in both of the tori $F$ and $\varphi(F)$, cutting each into annuli. Since $\pm 1$ is not an eigenvalue of $A$, no such annulus of $\varphi(F)$ can surject onto the base circle. Thus, if the intersection is nonempty, $\varphi(F)$ has more than one annulus, and we can choose one that does not contain the intersection point with $\sigma$. This fits together with an annular region in $F$ that we choose disjoint from $\sigma$, to form a nullhomologous torus. This torus is compressible (as seen, for example, in the $\Z$-cover of $\partial Z_0(1,1;-1)$), so it bounds a solid torus, with the annuli bounded by longitudes of it. We can now reduce the number of intersection circles until $\varphi(F)$ is disjoint from $F$. Cutting along $F$, we see $\varphi(F)$ as an incompressible torus in $F\times I$. Applying standard theory to the complement of $\sigma$, eg, Waldhausen \cite[Proposition~3.1]{Wald}, we can arrange $\varphi(F)$ to be a fiber, and then isotope $\varphi$ so that it covers the identity map on the base circle. It is easily checked that the only elements of $\SL(2,\Z)$ that commute with $A$ are powers of $A$. Thus, on each fiber, $\varphi$ restricts to $A^j$ for a fixed $j$. We can then change $\varphi$ to the identity by a fiber-preserving isotopy covering a $2\pi j$-rotation on the base.
\end{proof}

We can now prove our main theorem. For $k\in\Z$ and fixed $r,s,n>0>m$, let $X_k$ be the 4-manifold obtained from $E(n)$ by the Fintushel-Stern construction on a fiber, using the twist knot $K_k$ (so $X_0=E(n)$). We have embeddings $C(r,s;m)\subset Z_0(r,s;m)\subset X_0\# N\overline{\CP^2}$, where $N=r+s-m-3\ge 0$, and the last embedding is obtained from the simplest case $r=s=-m=1$ by blowing up meridians of the three negatively framed 2-handles of Figure~\ref{elliptic} to suitably lower their framings. Let $X^*_k$ be the manifold obtained from $X_0\# N\overline{\CP^2}$ by cutting out $C(r,s;m)$ and regluing it via the torus twist $f^k$.

\begin{thm} \cite{InfCork} For each $k$, the manifold $X^*_k$ is diffeomorphic to $X_k\# N\overline{\CP^2}$. In particular, the manifolds $X^*_k$ for $k\in\Z$ are pairwise nondiffeomorphic, so $(C(r,s;m),f)$ is an infinite order cork (for each fixed choice of $r,s,m$ as above).
\end{thm}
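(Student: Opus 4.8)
The plan is to exhibit both $X^*_k$ and $X_k\#N\overline{\CP^2}$ as the result of a single modification of $X_0\#N\overline{\CP^2}$ that is supported in the interior of $Z_0(r,s;m)$ and replaces $Z_0(r,s;m)$ by $Z_k(r,s;m)$, and then to verify that the cork-twist and knot-surgery descriptions reglue $Z_k$ by isotopic maps. I would settle the base case $r=s=-m=1$ (where $N=0$) first and reduce the general case to it by blowing up.

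In the base case $Z_0(1,1;-1)\subset E(n)=X_0$ is a cusp neighborhood with an extra vanishing cycle, so $W_0=\nu F\cong T^2\times D^2$ is a neighborhood of a regular fiber $F$ and $\sigma$ is a section of the boundary torus bundle. Because the torus twist $f$ is supported near $T\subset\partial C\subset\inter Z_0$, cutting out $C$ and regluing by $f^k$ fixes $\overline{E(n)\setminus\inter Z_0}$ and, by Proposition~\ref{fk}, carries the interior piece $Z_0$ to $Z_k$ while preserving $\partial Z_0$ and all fine curves (notably $\sigma$, with orientation); thus $X^*_k(1,1;-1)\cong(\overline{E(n)\setminus\inter Z_0})\cup_g Z_k$. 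On the other hand, Proposition~\ref{Wk} and its discussion identify the replacement of $W_0$ by $W_k=S^1\times E(k,1)$ preserving $\sigma$ with the Fintushel--Stern knot construction on $F$ using $K_k=\kappa(k,-1)$; performed in $E(n)$ this gives $X_k\cong(\overline{E(n)\setminus\inter Z_0})\cup_h Z_k$. (Proposition~\ref{Wk} also provides the nonseparating sphere that makes $Z_k$, hence the whole construction, well defined.)

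The crux, and the step I expect to be the main obstacle, is to show that the two gluings $g$ and $h$ agree up to isotopy. Fixing the abstract identification $\partial Z_k\cong\partial Z_0$, they differ by a self-diffeomorphism $\phi$ of $\partial Z_0(1,1;-1)$, and the essential danger is that $\phi$ is a nontrivial twist on the fiber torus $F$ — the very ambiguity built into knot surgery, which would change the diffeomorphism type. However, both $g$ and $h$ send the section $\sigma$ to the same oriented curve (for $g$ by Proposition~\ref{fk}, for $h$ because the Fintushel--Stern gluing takes $\sigma$ to the longitude of $K_k$), so $\phi$ preserves $(\sigma,\text{orientation})$. Lemma~\ref{Zdiff} then makes $\phi$ isotopic to the identity through self-maps of the pair $(\partial Z_0,\sigma)$ — the tracking of $\sigma$ being exactly what excludes the fiber twists — so $g\simeq h$ and $X^*_k(1,1;-1)\cong X_k$. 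Some care is required to keep the isotopy respecting $\sigma$ throughout, so as not to silently reintroduce the gluing ambiguity.

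For general $r,s,m$ the cork $C(r,s;m)\subset X_0\#N\overline{\CP^2}$ and its torus twist arise from the base case by $N=r+s-m-3$ blowups at meridians of the three negatively framed $2$-handles of Figure~\ref{elliptic}, lowering their framings to $-s,-r,m$; these can be taken disjoint from the twisting region near $T$, so the cork twist commutes with the blowups and $X^*_k\cong X^*_k(1,1;-1)\#N\overline{\CP^2}\cong X_k\#N\overline{\CP^2}$. Finally, the $X_k=E(n)_{K_k}$ are pairwise nondiffeomorphic by \cite{FS},\cite{FS2}, since knot surgery multiplies the Seiberg--Witten invariant of $E(n)$ by $\Delta_{K_k}(t^2)$ and the twist knots $\kappa(k,-1)$ have pairwise distinct Alexander polynomials; the Seiberg--Witten blowup formula ($b^+(E(n))>1$) extends this to the $X_k\#N\overline{\CP^2}$. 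Hence the $X^*_k$, although mutually homeomorphic by the opening Remarks, are pairwise nondiffeomorphic, so no nonzero power of $f$ extends over the contractible $C=C(r,s;m)$, and $(C(r,s;m),f)$ is an infinite order cork.
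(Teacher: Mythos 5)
This is correct and is essentially the paper's own proof: your base case matches the cork-twist description of $Z_k(1,1;-1)\subset E(n)$ (Proposition~\ref{fk}) against the Fintushel--Stern description (Proposition~\ref{Wk}) and invokes Lemma~\ref{Zdiff} on the pair $(\partial Z_0(1,1;-1),\sigma)$ to reconcile the two gluings, with \cite{FS}, \cite{FS2} giving distinctness. Your general case packages the blowup step as the twist ``commuting with the blowups,'' whereas the paper applies Lemma~\ref{Zdiff} directly to the embeddings $Z_k(r,s;m)\subset Z_k(1,1;-1)\#N\overline{\CP^2}\subset X^*_k$ without ever comparing the twists of the two different corks; both versions rest on the same fact from Proposition~\ref{fk} --- that the twist leaves $\eta_1$, $\eta_2$ and the $m$-framed meridian (hence the $-1$-framed blowup meridians) undisturbed --- so the difference is one of packaging rather than substance, though your phrase ``disjoint from the twisting region near $T$'' should be justified by that curve-preservation statement rather than by naive disjointness, since the regluing takes place along all of $\partial C(r,s;m)$.
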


\begin{proof} First consider the simplest case $r=s=-m=1$. Starting from the embedding $Z_0(1,1;-1)\subset X_0$, we can cut out a regular neighborhood $W_0$ of a fiber inside $Z_0(1,1;-1)$ and replace it by $W_k$, obtaining  $Z_k(1,1;-1)\subset X_k$ with the embedding preserving $\sigma$ (Proposition~\ref{Wk} and below). Alternatively, we can cut out the cork $C(1,1;-1)$ and reglue it by $f^k$, obtaining $Z_k(1,1;-1)\subset X^*_k$, again preserving $\sigma$ (Proposition~\ref{fk}). Clearly, the complements of $Z_k(1,1;-1)$ in these two closed manifolds are identified (preserving $\sigma$). But the two embeddings of  $Z_k(1,1;-1)$ are related by a diffeomorphism preserving $\sigma$ and its orientation, so by Lemma~\ref{Zdiff} we can assume they agree on the identified boundaries of the complements. Thus, the diffeomorphisms fit together as required.

For the general case, we blow up to obtain embeddings $Z_0(r,s;m)\subset Z_0(1,1;-1)\# N\overline{\CP^2}\subset X_0\# N\overline{\CP^2}$. The first embedding is obtained from Figure~\ref{Z} by adding $-1$-framed meridians to the curves with framings $-r$, $-s$ and $m$ so that blowing down changes all three framings to $-1$. After the two 2-handles $\zeta_i$ of $Z_0(r,s;m)$ cancel their 1-handles, two sets of these $-1$-framed meridians can be drawn as parallel copies of the curves $\eta_i$. Since the torus twist does not disturb $\eta_1$, $\eta_2$ or the $m$-framed meridian, it gives embeddings $Z_k(r,s;m)\subset Z_k(1,1;-1)\# N\overline{\CP^2}\subset X^*_k$. The theorem now follows from Lemma~\ref{Zdiff} as in the previous case.
\end{proof}

In principle, there should be a direct proof of the theorem, by drawing $X^*_k$ and $X_k\# N\overline{\CP^2}$, and exhibiting an explicit diffeomorphism. A link diagram of $X_k$ was drawn by Akbulut, then independently produced as \cite[Figure~10.2]{GS} (discussion on pp.\ 407--8), using the technique of \cite{AkScharl}. This diagram is obtained from Figure~\ref{Z} of $ Z_k(1,1;-1)$ by adding some 2-handles and a 4-handle. One 2-handle is attached along $\sigma$ with framing $-n$. The others are $-1$-framed and attached along parallel copies of the circles with framing $m$ and $-r$ (or $-s$), but the two types of new curves are interleaved. A diagram of $X^*_k$ can be similarly constructed by torus twisting $ Z_0(1,1;-1)$. To show the diagrams are diffeomorphic, it suffices to connect the $-r$- and $m$-framed curves by a framed arc whose union with the two attached circles and $\sigma$ is preserved (after handle slides) by the torus twist, since all the new 2-handles will be attached in a neighborhood of these. This project has not been attempted with sufficient intensity for success.


\section{Twists that preserve 4-manifolds}\label{Fishtail}

Having explicitly exhibited infinite order cork twists, we now address the opposite issue, finding conditions under which twisting a contractible submanifold does not change the diffeomorphism type of a 4-manifold. Let $T\subset M$ be an embedded torus or Klein bottle in a 3-manifold, and let $f$ be  the twist on $T$ parallel to a circle $\alpha\subset T$, as described in Section~\ref{TorusTwists}. Let $W$ be the elementary cobordism built from $I\times M$ by adding a 2-handle $h$ to $\{1\}\times M$ along a parallel copy $\gamma$ of $\alpha$, with framing $\pm1$ relative to $T$. Thus, the top boundary $\partial_+W$ is obtained from $M$ by surgery on $\gamma$.

\begin{thm}\label{cobordism} The twist $f$ on $\partial_-W=\{0\}\times M$ extends over $W$ so that it is the identity on $\partial_+W$.
\end{thm}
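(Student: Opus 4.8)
The plan is to prove this by a local, explicit construction. Since the twist $f$ is supported in the chosen tubular neighborhood $N(T)=S^1\times S^1\times I$ of $T$ and is the identity elsewhere, and since $\gamma$ may be taken to lie on $T$ inside $\{1\}\times N(T)$, it suffices to build the desired self-diffeomorphism $\Phi$ of the piece $W_0=(I\times N(T))\cup h$ that equals $f$ on $\partial_-W_0$, equals the identity on $\partial_+W_0$, and is the identity near the frontier $I\times\partial N(T)$; one then extends $\Phi$ by the identity over $I\times(M\setminus\inter N(T))$ and smooths corners. This reduces the theorem to understanding a single torus (or Klein-bottle) neighborhood together with one $\pm1$-framed handle.

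First I would record why the argument must be genuinely $4$-dimensional. In general $f$ is a nontrivial element of $\pi_0\Diff(M)$, so $\Phi$ cannot be taken level-preserving (a suspension of a $3$-dimensional isotopy); indeed one checks that $f$ cannot even be isotoped rel $\partial N(T)$ to a diffeomorphism supported in the smaller solid torus $\nu(\gamma)$, since such a localization would leave a nontrivial partial rotation on the face $S^1\times S^1\times\{1\}$. Thus the handle must absorb the twist. The key geometric input is the fact underlying Section~\ref{TorusTwists}: the twist is the monodromy of rotating the $\alpha$-direction of $T$, and a rotation of an annulus is exactly what a $2$-handle attached with framing $\pm1$ relative to that annulus can swallow.

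Concretely, the main step realizes $\Phi$ as a rotation of the handle $h=D^2\times D^2$ that is spread out along the collar $I\times M$. On the core circle $\partial D^2\times\{0\}=\gamma$ the rotation produces the $\theta$-rotation which, after tapering in the normal direction $t$, is the twist $f$ on $\{0\}\times M$; simultaneously the cocore $\{0\}\times D^2$ is rotated in the opposite sense. As $u$ runs from $0$ to $1$ the $\theta$-rotation is unwound, and the discrepancy accumulates as a single full rotation of a normal disk of $\gamma$ concentrated at the top. The crux is the framing computation: the framing $\pm1$ relative to $T$ is precisely the value for which this meridional rotation of $\gamma$ extends over $h$ (as a rotation of the cocore factor $D^2$) and dies on the free face $D^2\times\partial D^2\subset\partial_+W$, leaving the identity there. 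I expect this framing-matching to be the main obstacle: one must verify that the spread-out rotation restricts to exactly $f$ on the bottom (rather than $f$ composed with extra twisting) while restricting to a map isotopic to the identity on the surgered top, and that everything glues smoothly across the corner where $h$ meets $I\times M$.

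Finally I would dispatch the remaining cases. The same construction applies when $T$ is a Klein bottle: as noted in Section~\ref{TorusTwists}, the monodromy around $\beta$ reverses the orientations of both $\alpha$ and $I$, hence commutes with the Dehn twist and with our rotation, so the local model and its handle-absorption survive the gluing identification. A diagrammatic alternative, in the spirit of Section~\ref{Notes}, is to realize $f$ on $\partial_-W$ by the blow-up/slide/blow-down recipe of Section~\ref{TorusTwists} using an auxiliary $\mp1$-framed meridian of $\alpha$, push this curve up the collar, and use the $\pm1$-framed handle $h$ to cancel it after the slide, so that the induced map on $\partial_+W$ is the identity. This should yield the same $\Phi$ and serves as a useful check.
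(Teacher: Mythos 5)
Your central construction does not work, and the failure is visible in your own second paragraph. The ``unwinding'' you describe in the main step --- a family, parameterized by the collar coordinate $u$, that starts at $f$ and ends with ``the discrepancy accumulated as a single full rotation of a normal disk of $\gamma$ concentrated at the top'' --- is precisely an isotopy in $M$ from $f$ to a diffeomorphism supported in the solid torus $\nu(\gamma)$. But any diffeomorphism supported in a solid torus is isotopic to the identity (it is a composite of twists on the compressible torus $\partial\nu(\gamma)$; cf.\ the discussion in Section~\ref{TorusTwists}), so such a family would force $f$ to be trivial in $\pi_0(\Diff_+(M))$ --- exactly the conclusion your second paragraph correctly rules out, and false in every case of interest here ($T$ incompressible, $f$ of infinite order). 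So paragraphs two and three of your proposal contradict each other. The fallback you flag as the ``crux'' cannot rescue this: an extension of a meridional rotation over $h$ ``as a rotation of the cocore factor'' that dies on the free face would be a map $D^2\to\SO(2)$ restricting to a degree-one loop on $\partial D^2$, which is obstructed by $\pi_1(\SO(2))\cong\Z$; moreover, changing the framing by $\pm1$ only conjugates a fiberwise rotation by another fiberwise rotation, and rotations commute, so no choice of framing alters this. The role of the $\pm1$ framing is genuinely different from what you propose.

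The paper's proof inverts your strategy: instead of concentrating the twist \emph{onto} $\gamma$ at the top, it arranges the top-level map to be the identity \emph{near} $\gamma$. One composes the constant suspension of $f$ with an isotopy $g_t$ of $\id_M$ (supported near $T$) that rotates $T$ once parallel to the dual curve $\beta$; the level-preserving map $(t,x)\mapsto(t,g_t(f(x)))$ of $I\times M$ restricts to $f$ on the bottom and to $g_1\circ f$ on the top, whose support misses $\gamma$, so it extends over $h$ by the identity. The twist is then killed not by any special extension over the handle but by an isotopy of the surgered manifold $\partial_+W$: reversing Figure~\ref{alpha}, one drags the surgery solid torus once around $T$ parallel to $\beta$, and because its framing is $\pm1$ relative to $T$, each curve crossing $T$ slides over it exactly once, undoing the $\alpha$-twist while the rotation returns $T$ to its original position; absorbing this isotopy into a collar of $\partial_+W$ finishes the proof. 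This isotopy of $\partial_+W$ --- which does not exist in $M$ itself --- is the genuinely four-dimensional input. Your final paragraph (blow up a $\mp1$-framed curve, slide it around $T$, cancel against $h$) is in fact a sketch of this same mechanism, but as written it is offered only as a ``check'' and leaves unaddressed both the compensating $\beta$-rotation and the collar absorption needed to get the identity, rather than a map isotopic to the identity, on $\partial_+W$.
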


\begin{proof} Let $g_t$ be an isotopy of the identity on $M$, supported in a tubular neighborhood of $T$, that preserves $T$ setwise but rotates it once parallel to a circle $\beta$ dual to $\alpha$. Interpret the isotopy $g_t\circ f$ as a self-diffeomorphism of $I\times M$. We can assume that $\gamma$ lies outside the support of this map in  $\{1\}\times M$, then extend over the handle $h$ by the identity. In $\partial_+W$, Figure~\ref{alpha} (reversed) shows an isotopy rotating $T$ back to its original position while undoing the twist produced by $f$.
\end{proof}

In the case where $T$ is a torus, the above diffeomorphism of $W$ is a manifestation of a {\em fishtail twist}. The latter has been used in various forms for some decades; see \cite{CS} for a recent discussion. If $N$ denotes a tubular neighborhood of $T$ in $M$, then $I\times N\approx T^2\times D^2$, and $I\times N\cup h$ is a fishtail neighborhood. It is well known that the twist on $\{0\}\times T$ parallel to $\alpha$ extends over this neighborhood as the identity on the rest of its boundary. The main point is that the boundary is a torus bundle with monodromy given by a Dehn twist parallel to $\alpha$, so the torus twist can be absorbed by a fiber-preserving isotopy covering a full rotation of the base.

As an application, we partially answer \cite[Question~1.6]{InfCork}. Let $D\subset B^4$ be a slice disk for a composite slice knot $K=\partial D$. For example, $D$ can be the the obvious ribbon disk for any nontrivial knot of the form $\kappa\#-\kappa$, the case considered in \cite{InfCork}. Let $C=C(D,m)$ be the contractible 4-manifold obtained from the slice complement by adding a 2-handle along a meridian with framing $m\ne0$, so that $C$ is $C(r,s;m)$ in the case $\kappa=\kappa(r,-s)$. The boundary of $C$ is the homology sphere obtained by $(-\frac1m)$-surgery on $K$. It is irreducible and has incompressible tori as in the previous section: Start with a sphere $S$ in $S^3$ intersecting $K$ in two points and splitting it nontrivially as a sum $K_0\# K_1$. Remove the intersections by surgering $S$ to a torus in $S^3-K$, using a tube following $K_1$. Such a torus has an obvious product decomposition, with one factor a meridian of $K$ and the other a 0-framed longitude of $K_1$. The cork twists of $C(r,s;m)$ in the previous section have this form for a longitudinal twist, on the unique incompressible torus if $m=-1$. It was asked in \cite{InfCork} whether twisting a fixed embedding of $C$ by the full action of such a torus could give a family of distinct diffeomorphism types indexed by $\Z\oplus\Z$. Previously, a preliminary version of Akbulut's 2014 posting \cite{withdrawn} unsuccessfully attempted to show that the meridian twist was an infinite order cork twist in the case of the obvious ribbon disk for the square knot with $m=-1$, that is, the manifold $C(1,-1;-1)$ in our present notation. However, both constructions are impossible when $m=\pm1$:

\begin{cor}\label{trivial} Every torus twist parallel to the meridian of $K$ extends over $C(D,\pm 1)$. In particular, for each $r,s\in\Z$, the meridian twist on $\partial C(r,s;\pm 1)$ extends over $C(r,s;\pm 1)$.
\end{cor}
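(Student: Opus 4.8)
The plan is to realize $C(D,\pm1)$ so that its single $2$-handle plays the role of the fishtail handle $h$ in Theorem~\ref{cobordism}, and then invoke the fishtail absorption directly. Write $S=B^4\setminus\nu(D)$ for the slice complement, so that $\partial S=S^3_0(K)$ and $C=C(D,m)=S\cup h_C$, where $h_C$ is attached along a meridian $\mu_K$ of $K$ with framing $m$. The meridian twist is the torus twist on the incompressible torus $T\subset\partial C$ described above, parallel to the meridian factor $\mu$ of the product decomposition $T=\mu\times\lambda_{K_1}$, where $\mu$ is a meridian of $K$ and $\lambda_{K_1}$ a $0$-framed longitude of $K_1$. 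The key numerical coincidence to record is that the surface framing induced on $\mu$ by $T$ equals the $0$-framing (that of the meridian disk in $S^3$): pushing $\mu$ off within $T$ runs it once around $\lambda_{K_1}$, which is $0$-framed. Consequently the framing $m$ of $h_C$, measured relative to the meridian disk, coincides with the framing relative to $T$, and for $m=\pm1$ this framing is exactly $\pm1$ relative to $T$.

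Next I set up the fishtail neighborhood inside $C$. Let $N=\nu_{\partial C}(T)\cong T^2\times I$ be a neighborhood of $T$ in $\partial C$, and take an inward collar $I\times N\subset C$ with $\{0\}\times N=N\subset\partial C$ and the inner end $\{1\}\times N$ positioned near $\partial S$. Since $\mu_K$ is isotopic in the complement of $K$ to the meridian factor $\mu\subset T$, I can arrange the attaching circle of $h_C$ to appear as a curve $\gamma\parallel\mu$ on the inner end $\{1\}\times N$. By the framing computation above, $h_C$ is then attached along $\gamma\parallel\mu$ with framing $\pm1$ relative to $T$, so $F=(I\times N)\cup h_C$ is precisely a fishtail neighborhood of $T$ as in the remark following Theorem~\ref{cobordism}, now embedded in $C$ with $F\cap\partial C=N$.

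Finally I assemble the extension. By the fishtail absorption (equivalently, Theorem~\ref{cobordism} applied to $F$), the twist on $\{0\}\times T\subset\partial C$ parallel to $\mu$ extends over $F$ so as to be the identity on the remainder of $\partial F$. Since the meridian twist $f$ is supported in $N$ and is the identity on $\partial C\setminus N$, I extend it by the identity over $C\setminus F$; the two extensions agree (both the identity) along the inner boundary $\partial F\setminus N$, yielding a self-diffeomorphism of $C(D,\pm1)$ restricting to $f$ on the boundary. This proves the first assertion, and the second is the special case $\kappa=\kappa(r,-s)$ with $D$ the obvious ribbon disk for $K=\kappa\#-\kappa$, so that $C=C(r,s;\pm1)$.

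I expect the main obstacle to be the middle step: verifying that the actual $2$-handle $h_C$ can genuinely be taken as the fishtail handle $h$. This requires (i) the framing bookkeeping that the $T$-framing of $\mu$ is $0$, so that the attaching framing $m$ equals $\pm1$ relative to $T$ \emph{precisely} when $m=\pm1$ -- this is exactly what forces the hypothesis and explains why the meridian twist is not addressed for $\lvert m\rvert\ge2$ -- and (ii) the isotopy positioning the attaching circle $\mu_K\subset\partial S$ as a curve parallel to $\mu$ on the inner end of the collar $I\times N$, so that $F$ embeds in $C$ with its handle in the correct location. Once these are in place, the absorption and the extension by the identity are immediate.
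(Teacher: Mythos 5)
Your framing bookkeeping is correct (the $T$-framing of $\mu$ is indeed the $0$-framing, so $m=\pm1$ is exactly the condition for the attaching framing of the meridian handle to be $\pm1$ relative to $T$), and your overall strategy---absorb the twist into a fishtail-type cobordism inside $C$ and extend by the identity---is the same as the paper's. But the middle step, which you yourself flag as the main obstacle, contains a genuine error rather than a routine verification: the configuration $F=(I\times N)\cup h_C$ with $F\cap\partial C=N$ cannot exist. In $C=S\cup h_C$ nothing is attached after $h_C$, so the lateral boundary of $h_C$ (the surgery solid torus that replaces a neighborhood of $\mu_K$ in $\partial S$) is necessarily a subset of $\partial C$; hence $h_C$ can never lie in the interior of $C$ attached to the inner end of a collar, as your picture requires. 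What is true is that a neighborhood of $T$ in $\partial S$, thickened inside $S$ and united with $h_C$, is a fishtail neighborhood embedded in $C$---but it meets $\partial C$ in its \emph{top} (surgered) boundary, so the torus $T\subset\partial C$ sits in $\partial_+$ of this cobordism, not in $\partial_-$. Theorem~\ref{cobordism} as stated extends twists on $\partial_-W$ as the identity on $\partial_+W$, so it does not apply to your $F$ in the direction you need; you would need the upside-down statement, that a twist on a torus in the surgered end extends as the identity on the other end and the sides.

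Proving that upside-down statement is exactly the idea you are missing, and it is precisely what the paper does: turn the cobordism over. The paper's $W$ is the collar of $\partial C$ union the \emph{thickened cocore} of the meridian handle (its dual handle), whose attaching circle in $\partial C$ is the $0$-framed belt sphere, i.e., a $0$-framed meridian of $\mu_K$. Blowing down the $\pm1$-framed curve $\mu_K$ (this is where $m=\pm1$ enters, just as in your framing count) turns that belt sphere into a $\mp1$-framed meridian of $K$, i.e., a parallel copy of $\mu$ with framing $\mp1$ relative to $T$. This $W$ genuinely satisfies $\partial_-W=\partial C$ together with the hypotheses of Theorem~\ref{cobordism}, so the twist extends over $W$ as the identity on $\partial_+W$, and then by the identity over the rest of $C$. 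Your argument becomes correct once you replace ``$h_C$ as the fishtail handle'' by ``the dual of $h_C$ as the fishtail handle'' and add this blow-down computation of its attaching curve and framing.
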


\begin{proof} We find a cobordism $W\subset C$ as in the theorem with $\partial_-W=\partial C$. Since the diffeomorphism extends over $W$ as the identity on $\partial_+W$, we can extend as the identity over the rest of $C$. To construct $W$, begin with a collar of $\partial C$. The additional 2-handle $h$ is obtained by thickening the cocore of the meridian 2-handle $h^*$ of $C$. The attaching circle of $h$ is a 0-framed meridian to that of $h^*$. Interpreting the diagram as a 3-manifold and blowing down $h^*$, we realize $h$ by a $\mp 1$-framed meridian of $K$ as required.
\end{proof}

It follows that for a fixed embedding and torus, cutting $C(D;\pm 1)$ out of a 4-manifold and regluing it by torus twists generates a family of 4-manifolds whose diffeomorphism types are indexed at most by $\Z$. The problem remains open when $|m|>1$. However, distinguishing  meridian twists of $C(r,s;m)$ would require a somewhat different approach, since the proof in \cite{InfCork} depends on an embedding in a 4-manifold $X$ satisfying the hypothesis of the following for the meridian $\alpha$:

\begin{cor} Let $Y\subset X$ be a 4-manifold pair, and let $f$ be a twist on a torus or Klein bottle $T\subset\partial Y$, parallel to some curve $\alpha$. Suppose that $X-\inter Y$ contains an embedded disk with boundary $\alpha$, inducing framing $\pm 1$ relative to $T$. Then cutting out $Y$ and regluing it after twisting by a power of $f$ yields a manifold diffeomorphic to $X$.
\end{cor}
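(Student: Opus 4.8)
The plan is to realize the hypothesis as exactly the cobordism $W$ of Theorem~\ref{cobordism} sitting inside $X-\inter Y$, and then to assemble a global diffeomorphism from identity pieces together with the boundary extension that theorem provides. The essential observation is that the disk guaranteed by the hypothesis is precisely the core of the $2$-handle $h$ appearing in the theorem, so almost no new work is required.

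First I would set $M=\partial Y$ and push a collar $M\times I$ of $M$ into $X-\inter Y$. Let $D$ be the embedded disk of the hypothesis, with $\partial D=\alpha\subset T$ and framing $\pm1$ relative to $T$. After an isotopy I may assume $\inter D$ lies beyond the collar and that $D$ meets $M\times\{1\}$ in a parallel copy $\gamma$ of $\alpha$; since the collar carries framings along unchanged, $\gamma$ still has framing $\pm1$ relative to $T$. A tubular neighborhood $h$ of $D$ is then a $2$-handle attached to $M\times\{1\}$ along $\gamma$ with framing $\pm1$ relative to $T$, so $W=(M\times I)\cup h$ is precisely the elementary cobordism of Theorem~\ref{cobordism}, with $\partial_-W=M=\partial Y$. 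By that theorem the twist $f$ extends to a self-diffeomorphism $\hat f$ of $W$ restricting to $f$ on $\partial_-W$ and to the identity on $\partial_+W$; its $k$-th power $\hat f^{\,k}$ then restricts to $f^k$ on $\partial_-W$ and to the identity on $\partial_+W$, which disposes of an arbitrary power of the twist without any further hypothesis.

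It remains to patch the pieces together. Writing $R=\cl\big((X-\inter Y)-W\big)$, so that $W\cap R=\partial_+W$, we have $X=Y\cup_M W\cup_{\partial_+W}R$, while the reglued manifold $X^*$ carries the same decomposition except that $Y$ is attached to $W$ by $f^k$ rather than the identity. I would define $\Phi\co X\to X^*$ to be the identity on $Y$, the map $\hat f^{\,k}$ on $W$, and the identity on $R$. Along $\partial_-W$ the relation $\hat f^{\,k}|_{\partial_-W}=f^k$ matches the regluing map exactly, and along $\partial_+W$ the relation $\hat f^{\,k}|_{\partial_+W}=\id$ makes $\Phi$ agree with the identity, so the three pieces fit to a diffeomorphism $X\cong X^*$. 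The Klein-bottle case needs no separate treatment, as Theorem~\ref{cobordism} already covers it.

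The only delicate points are bookkeeping rather than substance: confirming that the collar faithfully transports the framing so that $h$ really is the handle of Theorem~\ref{cobordism}, and verifying the two gluing compatibilities so that $\Phi$ is globally well defined. I expect the framing identification---checking that ``framing $\pm1$ relative to $T$'' for $\partial D$ translates into the same relative framing for the attaching circle $\gamma$ of $h$---to be the one place requiring genuine care, since everything else reduces to the already-established extension result.
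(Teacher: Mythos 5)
Your proposal is correct and takes essentially the same approach as the paper, whose entire proof reads ``Observe the cobordism $W$ in $X-\inter Y$. Extend $f^k$ outward from there by the identity'': your collar-plus-handle construction of $W$ from the disk, the appeal to Theorem~\ref{cobordism}, and the three-piece assembly of the diffeomorphism are exactly what that terse proof leaves implicit.
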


\begin{proof} Observe the cobordism $W$ in $X-\inter Y$. Extend $f^k$ outward from there by the identity.
\end{proof}

The same question of \cite{InfCork} asks about longitudinal twists for slice disks not covered by the main theorem of that paper. Ray and Ruberman \cite{RR} have recently observed that when $K_1$ is a torus knot, {\em every} twist on the torus determined by $K_1$ extends over $C(D,\pm1)$. This is seen by combining  Corollary~\ref{trivial} with the Seifert circle action on the complement of $K_1$, which shows that twisting on some (nonzero) longitude is isotopic to the identity. A closer look yields the first examples of contractible manifolds, including $C(1,-1;-1)$, that cannot be nontrivial corks even though their boundaries have incompressible tori:

\begin{cor}\label{noncork} Let $C=C(D,\pm1)$ be obtained as above with $\partial D=\kappa\#-\kappa$, where $\kappa$ is a torus knot. Then every diffeomorphism of $\partial C$ extends over $C$.
\end{cor}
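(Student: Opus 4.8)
The plan is to show that the mapping class group of $\partial C$ is generated by twists that all extend over $C$, so that every element extends. First I would recall the structure of $\partial C$: it is the homology sphere obtained by $(\mp 1)$-surgery on the composite knot $K=\kappa\#-\kappa$. The essential geometric input is that this 3-manifold is irreducible and its incompressible tori are tightly constrained (as emphasized in Section~\ref{TorusTwists} via Waldhausen~\cite{W}). Concretely, the swallow-follow torus $T$ arising from the connected-sum sphere (surgered along a tube following one summand $\kappa$) splits $\partial C$ along the torus decomposition, and because $\kappa$ is a \emph{torus} knot, the knot complement is Seifert fibered. My strategy is to identify a generating set for $\pi_0(\Diff_+(\partial C))$ consisting of (i) twists on the torus $T$ and (ii) diffeomorphisms supported in the Seifert-fibered pieces, then check each generator extends over $C$.

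The key steps, in order, are as follows. \textbf{Step 1.} Use the last corollary of Waldhausen~\cite{W} (invoked in Section~\ref{TorusTwists}) to conclude that, up to finite index, $\pi_0(\Diff_+(\partial C))$ is generated by torus twists on the incompressible tori together with diffeomorphisms of the JSJ pieces. \textbf{Step 2.} For the torus $T$ determined by $\kappa$, appeal to Corollary~\ref{trivial}: the \emph{meridian} twist on $T$ extends over $C=C(D,\pm1)$ since $m=\pm1$. \textbf{Step 3.} Invoke the Ray--Ruberman observation quoted in the excerpt: because $\kappa$ is a torus knot, the Seifert circle action on the complement of $\kappa$ shows that some nonzero \emph{longitudinal} twist on $T$ is isotopic to the identity in $\partial C$. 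Combined with Step~2, this shows that the full rank-two lattice of twists on $T$ has finite-index image in the extension group, with every coset represented by a diffeomorphism that extends over $C$. \textbf{Step 4.} Handle the remaining generators coming from the Seifert-fibered pieces and the finite extension. Since the Seifert structure on the $\kappa$-complement restricts to a circle action that extends inward over the collar (and ultimately over the $2$-handle, using that the meridian twist already extends), any fiber-preserving diffeomorphism of a Seifert piece can be absorbed; the residual finite-order mapping classes are realized by isometries of $\partial C$ that one checks extend.

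I expect the main obstacle to be \textbf{Step 4}, the control of the finite extension and the pieces not covered by the torus twist on $T$. Waldhausen's theorem only gives generation up to finite index, so I must argue that the finitely many extra mapping classes also extend over $C$; for this I would analyze the symmetry group of the Seifert-fibered summand and show that the relevant symmetries are compatible with the slice-disk complement structure, using that $\kappa\#-\kappa$ is a symmetric (indeed reversible, amphichiral-in-the-sum) configuration so that the obvious involutions manifestly bound over $C$. The cleanest route is probably to show directly that the entire mapping class group is generated by the twists on $T$ together with the canonical involution exchanging the two summands, and that \emph{both} of these extend: the involution extends because it is induced by the ambient symmetry of the ribbon disk $D$ for $\kappa\#-\kappa$, while the twists extend by Steps~2--3. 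Verifying that these two families genuinely generate—rather than merely generate up to finite index—is the delicate point, and is where the specific torus-knot hypothesis does the real work.
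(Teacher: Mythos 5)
Your Steps 1--3 follow the same route as the paper: split $\partial C$ (which is $\mp 1$-surgery on $\kappa\#-\kappa$) along the swallow--follow torus $T$ into the two torus-knot complements, and extend all twists on $T$ by combining Corollary~\ref{trivial} (meridian twists) with the Ray--Ruberman observation that the Seifert circle action makes a twist on some nonzero longitudinal slope isotopic to the identity. One simplification you miss: the paper does not need Waldhausen's ``generation up to finite extension'' at all. Since $T$ is the \emph{unique} incompressible torus in $\partial C$, every self-diffeomorphism preserves it up to isotopy, so one can analyze the restriction to the two Seifert-fibered pieces directly; this eliminates the finite-index gap you flag in Step~1 from the outset.

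The genuine error is in Step 4. Your proposed residual generator --- ``the canonical involution exchanging the two summands'' --- does not exist as a self-diffeomorphism of $\partial C$, and the paper explicitly rules it out: an orientation-preserving swap of the two complements would force $\sigma(\kappa)=\sigma(-\kappa)=-\sigma(\kappa)=0$, impossible for a torus knot, and an orientation-reversing swap is excluded by the handedness of the gluing map. Indeed, the ``ambient symmetry of the ribbon disk'' you invoke is an orientation-reversing reflection of $S^3$ preserving $\kappa\#-\kappa$; it carries the $\mp 1$-framing to the $\pm 1$-framing, hence induces a diffeomorphism $S^3_{\mp 1}(K)\to S^3_{\pm 1}(K)$ rather than a self-map of $\partial C$, so there is nothing for it to extend. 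Consequently your claimed generating set (twists on $T$ plus the exchange involution) is wrong, and the mapping class you actually must handle is a different one: since each piece is Seifert fibered with two exceptional fibers and any self-diffeomorphism preserves that structure, the only diffeomorphism remaining modulo twists on $T$ is the involution induced by reversing the string orientation of $\kappa$. That involution is a symmetry of the entire construction (of the slice disk $D$ and the meridian $2$-handle), so it obviously extends over $C$ --- but without identifying it, and without the signature/handedness argument excluding swaps, your proof does not close.
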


\begin{proof} The boundary of $C$ is $\mp 1$-surgery on $\kappa\#-\kappa$, which is made by gluing together the complements of $\kappa$ and $-\kappa$ along their boundary tori $T$. (The surgery can be interpreted as a twist in the gluing map.) Since $T$ is the unique incompressible torus in $\partial C$, it is preserved by any self-diffeomorphism (up to isotopy). We have seen that twists on $T$ extend. The nonzero signature of $\kappa$ rules out orientation-preserving diffeomorphisms of $\partial C$ switching the two knot complements, and orientation-reversing switches are ruled out by the handedness of the gluing map. The complement of $\kappa$ is Seifert fibered with two exceptional fibers, and any self-diffeomorphism preserves this structure, so we are left with only the involution of $\kappa$ that reverses its string orientation. The induced involution of $\partial C$ obviously extends.
\end{proof}

\section{Torus twists and $\delta$-moves}\label{Twist}

In this section, we give a careful definition of Akbulut's $\delta$-moves, and almost entirely reduce them to twists on tori, Klein bottles and spheres. Torus and Klein bottle twists were introduced in Section~\ref{TorusTwists}. Twists on spheres are defined similarly but have order at most 2 in $\pi_0(\Diff_+(M))$ (since $\pi_1(\SO(3))=\Z/2$). Klein bottle twists are of limited use: They only arise when $M$ contains the $I$-bundle over the Klein bottle with orientable total space, a somewhat rare phenomenon. In particular, this does not occur for homology spheres, so there can be no Klein bottle cork twists. A tubular neighborhood of a Klein bottle $K$ is bounded by a torus $T$ double covering $K$. This is incompressible if and only if $K$ is, since any compressing disk for $K$ must be bounded by an orientation-preserving loop in $K$. It is easy to see that the square of a twist on $K$ is a twist on $T$ parallel to the same circle $\alpha$.

We define $\delta$-moves following Akbulut \cite{Acork}, with additional attention to detail in anticipation of the upcoming proofs. First, consider the standard 3-manifold diffeomorphism given by Figure~\ref{1h}, which can be obtained by blowing up a $\pm1$-framed unknot around one twist box, sliding this unknot over the 0-framed circle so that it surrounds the other twist box, then blowing back down. For a $\delta$-move, start with a framed circle $C$ in a 3-manifold $M$. Draw $M$ as surgery on a link $L$ in $S^3$ so that $C$ appears as a 0-framed unknot in $S^3-L$, spanned by a disk $\Delta\subset S^3$. Let $C_\pm$ be a pair of circles parallel to $C$ in the diagram and disjoint from $\Delta$. Connect these circles by a (possibly complicated) band $b$ in the diagram, disjoint from $\Delta$ and from the interior of the annulus $A$ bounded by $C_\pm$. (See Figure~\ref{delta}, ignoring the horizontal dashed curve.) The surface $T_0=A\cup b$ is an embedded punctured torus or Klein bottle in $S^3-L$, depending on the twisting of $b$. Let $\delta=\partial T_0$. Under the additional hypothesis that $\delta$ is unknotted in the 3-manifold $M$, we can add a suitably framed 2-handle to $I\times M$ along $\delta$ and cancel it with a 3-handle to recover $I\times M$. If $T_0$ is orientable, this 2-handle will be 0-framed in $S^3$ (since the normal to $\delta$ along $T_0$ will give the 0-framing in both $S^3$ and $M$). Otherwise, we hypothesize that its framing is 0 in $S^3$. Since $C$ is unknotted in $S^3$ and $b$ avoids $A$ and $\Delta$, we can then apply Figure~\ref{1h} to change $k$ to $k+1$ in Figure~\ref{delta}. Canceling the new twists by an isotopy in $S^3$, we return to the original diagram (ignoring the dashed curve). Assuming the 3-handle can be suitably controlled (an issue we discuss below), the net effect is a self-diffeomorphism of $M$. To see that this diffeomorphism may be nontrivial, note that it wraps the dashed curve twice around $A$ parallel to $C$.

\begin{figure}
\labellist
\small\hair 2pt
\pinlabel $+1$ at 218 68
\pinlabel $-1$ at 269 68
\pinlabel $0$ at 115 18
\pinlabel $0$ at 300 18
\endlabellist
\centering
\includegraphics{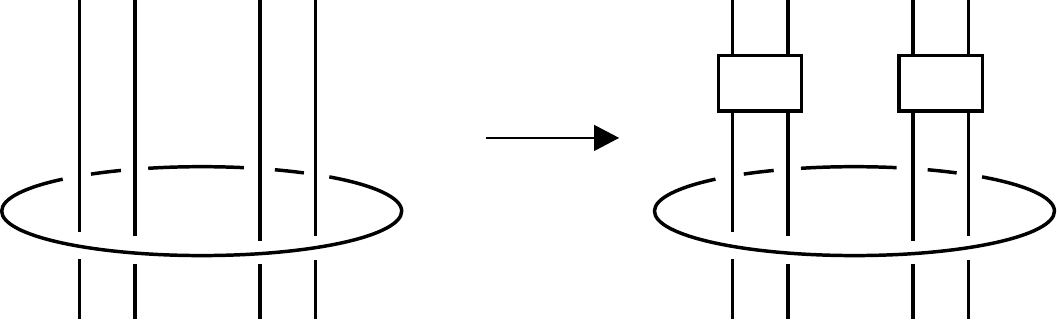}
\caption{Twisting along a 0-framed unknot.}
\label{1h}
\end{figure}

\begin{figure}
\labellist
\small\hair 2pt
\pinlabel $k$ at 56 135
\pinlabel $-k$ at 55 23
\pinlabel $C_+$ at 6 111
\pinlabel $C$ at 6 85
\pinlabel $C_-$ at 6 50
\pinlabel $\leftarrow A\subset T_0$ at 120 98
\pinlabel $b\subset T_0$ at 117 38
\pinlabel $\Delta$ at 82 86
\endlabellist
\centering
\includegraphics{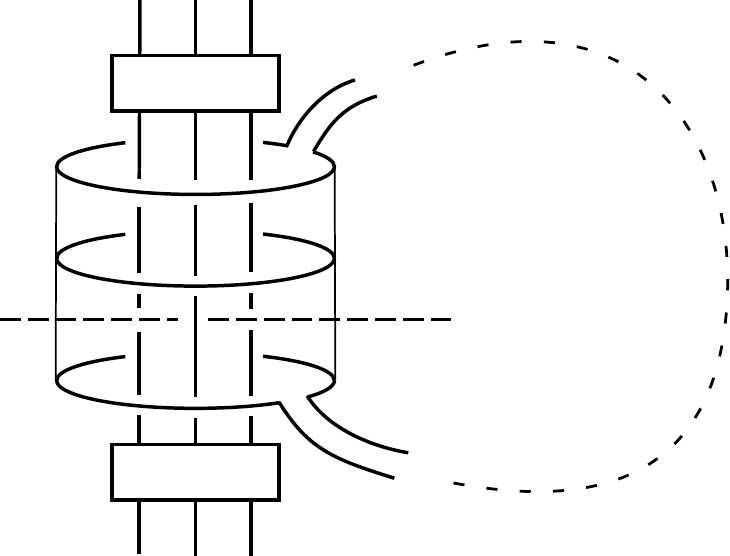}
\caption{A $\delta$-move, where $\delta=\partial T_0$ is the band-sum of $C_+$ and $C_-$ along the band $b$.}
\label{delta}
\end{figure}

\begin{de} The above diffeomorphism (when defined) is called a {\em $\delta$-move} \cite{Acork}. The corresponding link diagram of $M$, with $\delta$ unknotted in $M$ (inducing the 0-framing in $S^3$) and drawn as in Figure~\ref{delta} for an explicit choice of $b$, will be called a {\em $\delta$-move diagram}. A $\delta$-move diagram will be called {\em orientable} or {\em nonorientable} according to whether $T_0$ is orientable. It will be called {\em compressible} if there is a disk $d\subset M$ such that $d\cap T_0=\partial d$ is neither trivial nor boundary-parallel in $T_0$. It will be called {\em incompressible} otherwise.
\end{de}

The relation between surface twists and $\delta$-moves begins with the following:

\begin{prop}\label{TtoDelta} Every torus (resp.\ Klein bottle) twist on a 3-manifold $M$ is isotopic to a $\delta$-move with an orientable (resp.\ nonorientable) diagram.
\end{prop}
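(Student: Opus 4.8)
The plan is to show that an arbitrary torus (or Klein bottle) twist can be repackaged into the specific diagrammatic form required of a $\delta$-move, thereby realizing it as one. I would begin with the surgery description of twists given at the end of Section~\ref{TorusTwists}: given a twist $f$ on $T$ parallel to $\alpha$, draw a framed link diagram of $M$ so that $\alpha$ appears as a $0$-framed unknot in the ambient $S^3$, with $T$ inducing the $0$-framing, and so that the dual curve $\beta$ bounds a disk $\Delta$ in $S^3$ with interior disjoint from $\alpha$. (The parenthetical remark in Section~\ref{TorusTwists} explicitly asserts that one can simultaneously arrange $\alpha$ and $\beta$ to bound disks with disjoint interiors, which is exactly the ingredient I want.) The idea is then to identify $\alpha$ with the curve $C$ of a $\delta$-move diagram and to build the punctured surface $T_0=A\cup b$ so that its twisting recovers $T$.

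Next I would construct the parallel pair $C_\pm$ and the band $b$ directly from the surface $T$. Since $T$ is an embedded torus (resp.\ Klein bottle) containing $\alpha=C$, a tubular neighborhood of $\alpha$ in $T$ is an annulus that we can take to be the annulus $A$ bounded by two parallel copies $C_\pm$ of $C$; the rest of $T$, namely $T$ minus a disk neighborhood together with an annular collar, furnishes the band $b$ connecting $C_+$ to $C_-$. Whether $b$ is untwisted or twisted (hence whether $T_0=A\cup b$ is orientable or nonorientable) is dictated precisely by whether $T$ is a torus or a Klein bottle, which gives the claimed correspondence between the two cases. The disk $\Delta$ bounded by $C$ can be taken to be a subdisk of the disk bounded by $\alpha$ in $S^3$, chosen disjoint from $b$; the condition that $b$ avoid the interior of $A$ and of $\Delta$ follows from the embeddedness of $T$ and the genericity of the chosen $\Delta$.

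The remaining points to verify are the framing and unknottedness hypotheses built into the definition of a $\delta$-move diagram. The curve $\delta=\partial T_0$ bounds the disk in $T$ that completes $T_0$ to the closed surface $T$, so $\delta$ is unknotted in $M$ and, in the orientable case, is automatically $0$-framed in $S^3$ because the surface framing from $T_0$ agrees with the $0$-framing in both $S^3$ and $M$; in the nonorientable case the required $S^3$-framing of $0$ is arranged by the same surface-framing observation, matching the hypothesis in the definition. Finally I would confirm that the resulting $\delta$-move and the original twist are literally the same diffeomorphism, not merely similar ones: both are produced by blowing up a $\pm1$-framed meridian of $C=\alpha$, sliding it once around $T$ (equivalently, applying Figure~\ref{1h} to change $k$ to $k+1$), and blowing back down, so they agree up to isotopy supported near $T$.

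The main obstacle I expect is the simultaneous control of $\alpha$, $\beta$, and the surface $T$ in a single diagram: one must put $\alpha$ in as an unknot with $T$ inducing the $0$-framing while also displaying enough of $T$ to read off the band $b$, and do so without introducing spurious intersections with $\Delta$ or with $\inter A$. This is a bookkeeping issue about compatible diagrammatic normalizations rather than a deep difficulty, and the blow-up/crossing-change procedure from Section~\ref{TorusTwists} should handle it, but it is where the care is needed to guarantee that the twist and the $\delta$-move coincide as diffeomorphisms and not just as abstract moves.
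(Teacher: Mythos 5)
Your plan follows the paper's proof essentially step for step: you build $T_0$ as a subsurface of $T$ itself (an annular neighborhood $A$ of $\alpha=C$ plus a band $b$ that is a neighborhood of an arc of $\beta$), so that $\delta=\partial T_0$ bounds the complementary disk $D\subset T\subset M$, which gives the required unknottedness; and you then try to match the two diffeomorphisms through the blow-up/slide/blow-down description. Two of your steps, however, are asserted where an argument is genuinely needed.

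First, the framing of $\delta$ in the Klein bottle case. You say the required $S^3$-framing of $0$ is ``arranged by the same surface-framing observation'' as in the orientable case, but that observation is exactly what fails for nonorientable surfaces: the framing a nonorientable subsurface of $S^3$ induces on its boundary need not be $0$ (a M\"obius band with unknotted boundary induces framing $\pm 2$), which is the very reason the definition of a $\delta$-move must \emph{hypothesize} the $0$-framing in the nonorientable case instead of deducing it. The paper closes this gap by a different argument: surger $T_0$ along the disk $\Delta$ bounded by $C$; the result is an orientable surface with the same germ along $\delta$, so the induced framing is the Seifert framing $0$. Second, your concluding claim --- that the twist and the $\delta$-move ``agree up to isotopy supported near $T$'' because both are blow-up/slide/blow-down --- is the actual content of the proposition, not a remark. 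To make it a proof you need the paper's two observations: (i) because your $D\subset T$ is disjoint from $\inter T_0$, the 2-3 handle pair introduced at $\delta$ cancels (this also settles the well-definedness issue for the 3-handle that plagues general $\delta$-moves, cf.\ Theorem~\ref{unique}), and under this cancellation the slide of the blown-up curve $\gamma$ over the 2-handle at $\delta$ becomes an isotopy of $\gamma$ across $D$, i.e.\ a trip from $C_+$ around $T$ to $C_-$ avoiding $A$; and (ii) one then checks, working in a tubular neighborhood of $T$, that this operation diverts every curve of $M$ crossing $T$ parallel to $\alpha$ --- curves meeting $A$ by Figure~\ref{delta}, and curves meeting $T$ elsewhere by collisions with $\gamma$ as in Figure~\ref{alpha}. (A minor point: the curve one blows up is a parallel copy of $C$ encircling the strands through $\Delta$, not a meridian of $C$; blowing a $\pm1$-framed meridian up and then down would merely change framings.)
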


\begin{proof} Let $T\subset M$ be a torus or Klein bottle, containing circles $\alpha$ and $\beta$ as in Definition~\ref{Ttwist} (where $\beta$ is a section of the circle bundle in the Klein bottle case). Choose a surgery diagram of $M$ in which $\alpha$ is given by an unknot in $S^3-L$ whose 0-framing is given by the normal vectors to $\alpha$ in $T$, and whose spanning disk is disjoint from $\beta$. Then the subset $\alpha\cup\beta$ has a neighborhood in $T$ that can be identified with $T_0$ in the definition of a $\delta$-move, with $\alpha$ identified with $C$. The boundary $\delta$ of this $T_0$ explicitly bounds a disk $D\subset T\subset M$, so is unknotted in $M$ as required, and correctly framed. (Even a nonorientable $T_0$ induces the 0-framing on $\delta$ in $S^3$, as seen by using $\Delta$ to surger it to a disk.) To reinterpret the twist on $T$ parallel to $\alpha$ as a $\delta$-move, we attach a 2-handle to $\delta$ with framing 0, then cancel it with a 3-handle, whose attaching sphere can be chosen to be $D$ capped with the core of the 2-handle. The $\delta$-move is realized by blowing up a $\pm 1$-framed circle $\gamma$ at $C_+$, sliding it over the 2-handle at $\delta$ to $C_-$, blowing it back down, and canceling the twists as in Figure~\ref{delta}. If we cancel the 2-3 handle pair, the slide over the 2-handle becomes an isotopy across the disk $D\subset T\subset M$. Thus, the slide appears in $M$ as an isotopy dragging $\gamma$ from $C_+$ to $C_-$ around $T$ in the direction that avoids the intervening annulus $A$. (In $S^3$, we see a handle slide each time $\gamma$ follows $D$ over a handle.) To show that this $\delta$-move is the twist on $T$, it suffices to work in a tubular neighborhood of $T$ containing the support of the diffeomorphisms and check that the $\delta$-move diverts any curve in $M$ that crosses $T$, parallel to $\alpha$. This is true for curves intersecting $A$, as Figure~\ref{delta} shows. Other curves through $T$ will be suitably modified as in Figure~\ref{alpha} when $\gamma$ collides with them.
\end{proof}

To make progress on a converse to this proposition, we must understand the extent to which a $\delta$-move is well-defined in general. Attaching the 2-handle along $\delta$ caps off the surface $T_0$ to an embedded torus or Klein bottle $T$. However, this lives not in $M$, but rather in the manifold $M_\#=M\# S^1\times S^2$ obtained from $M$ by 0-surgery on the unknot $\delta$. If the disk $D$ in $M$ along which the 3-handle is attached is disjoint from $\inter T_0$, we can eliminate the difficulty by canceling the 2-3 pair, obtaining a torus or Klein bottle twist on $M$ as in the previous proof. However, a proposed advantage of $\delta$-moves is their apparent additional generality, so we should consider what happens when $\inter D$ is allowed to intersect $T_0$ or other surfaces in the construction. (For a specific example, start with a twist on a separating sphere, surger the sphere at its poles to an immersed Klein bottle, and interpret this as a $\delta$-move with a nonorientable diagram.) In this generality, we have a torus or Klein bottle twist $f_\#$ in $M_\#$ that we wish to interpret as a diffeomorphism of $M$. We recover $M$ from $M_\#$ by surgering out the attaching sphere $S\subset M_\#$ of the 3-handle, which is obtained by capping $D$ with the core of the new 2-handle. The first difficulty we encounter if $\inter D$ intersects $T_0$ is that $f_\#$ may move $S$. Thus, to have a well-defined diffeomorphism of $M$, we must isotope $f_\#(S)$ back to $S$ in $M_\#$ before surgering back to $M$. This is not always possible. For example, starting from a torus twist exhibited as in the previous proof, we can obtain $D$ from the obvious disk by tubing it together with an essential sphere along an arc that intersects $A$. Then $f_\#$ can change the arc by a nontrivial element of $\pi_1(M)$ so that   $f_\#$ changes the class of $S$ in $\pi_2(M)$. If we know that $f_\#$ is isotopic to a diffeomorphism preserving $S$, then it does extend over the 3-handle, so restricts to a diffeomorphism on $M$. However, this diffeomorphism need not be unique: Starting again from a torus twist, with the torus bounding a solid torus in $M$, construct a new $M'$ by connected sum with another 3-manifold. If the sum occurs outside the solid torus, the twist of $M'$ is still trivial. If it occurs inside, we can obtain a slide diffeomorphism with infinite order in $\pi_0(\Diff_+(M'))$ (detected by its effect on $\pi_2(M')$). Thus, a $\delta$-move depends in general on the particular choice of auxiliary disk $D$ capping $\delta$ in $M$. This can be difficult to specify explicitly in a diagram. To make matters worse, it is a nontrivial problem to understand the extent to which the choice of isotopy from $f_\#(S)$ back to $S$ affects the resulting diffeomorphism of $M$. Fortunately, the issue can be resolved through work of Hatcher and McCullough \cite{HM}.

\begin{thm}\label{unique} Every $\delta$-move diagram for an irreducible 3-manifold $M$ determines a unique $\delta$-move diffeomorphism up to isotopy. On a reducible manifold $M$, a $\delta$-move diagram, together with a choice of auxiliary disk $D\subset M$ spanning $\delta$ (up to isotopy rel boundary) determines at most one diffeomorphism up to isotopy and elements of order 2 in $\pi_0(\Diff_+(M))$. The latter are composites of twists on a fixed collection of disjoint spheres.
\end{thm}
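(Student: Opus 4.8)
The plan is to recognize the $\delta$-move diffeomorphism as the image of the torus or Klein bottle twist $f_\#$ on $M_\#=M\#(S^1\times S^2)$ under the operation of surgering out the reducing sphere $S$, and to control the resulting indeterminacy by studying the space of embedded $2$-spheres, where the input of \cite{HM} enters. Write $N$ for $M_\#$ cut along $S$; since $S$ is nonseparating with surgery $M$, the manifold $N$ is $M$ with two open balls removed, and capping the two boundary spheres $S_\pm=\partial N$ with balls recovers $M$. A diffeomorphism of $M_\#$ that preserves $S$ (setwise, with its coorientation) restricts to $N$ and then extends across the capping balls; because $\pi_0\Diff(B^3\ \mathrm{rel}\ \partial)=0$ (the Smale conjecture), this extension is unique up to isotopy rel boundary, so we obtain a well-defined descent homomorphism $\Phi\co\pi_0\Diff(M_\#,S)\to\pi_0\Diff_+(M)$. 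The $\delta$-move is $\Phi([f_\#])$, once $f_\#$ has been isotoped to preserve $S$; the two assertions of the theorem are then (i) that such a representative exists, and (ii) that $\Phi([f_\#])$ is insensitive to the choices made, up to the stated sphere twists.

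For (i), I would argue that $f_\#(S)$ is isotopic to $S$ in $M_\#$. When $M$ is irreducible this is automatic: the nonseparating $2$-sphere in $M\#(S^1\times S^2)$ is unique up to isotopy (its $\pi_1$-translates are carried to it by ambient slides, and Laudenbach's theorem promotes homotopy of spheres to isotopy), so $f_\#(S)$, being another such sphere, is isotopic to $S$; composing $f_\#$ with the ambient isotopy produces the required representative. When $M$ is reducible I fix $D$, hence $S$, up to isotopy rel $\delta$; now $f_\#(S)$ need not be isotopic to $S$ (as in the example preceding the theorem, where $f_\#$ alters the class of $S$ in $\pi_2$), and when it is not there is no descent at all. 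This is exactly why the reducible statement asserts \emph{at most} one diffeomorphism.

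For (ii), the indeterminacy is measured as follows. Two isotopies carrying $f_\#(S)$ to $S$ differ by a loop in the component $\mathcal S$ of $S$ in the space of embedded spheres, and the evaluation fibration $\Diff(M_\#,S)\to\Diff(M_\#)\to\mathcal S$ identifies the ambiguity with $\Phi$ applied to the image of $\pi_1(\mathcal S,S)$ in $\pi_0\Diff(M_\#,S)$, i.e.\ with $\Phi$ of the kernel $K$ of the map that forgets $S$. The computation of $\pi_1(\mathcal S)$ and of this image is where \cite{HM} is decisive: their structural and finiteness results for diffeomorphism and embedding spaces of reducible $3$-manifolds show that, after $D$ is fixed, the surviving contributions to $\Phi(K)$ lie in the subgroup of $\pi_0\Diff_+(M)$ generated by twists on a fixed disjoint family of spheres determined by the diagram and $D$, each of order at most two (such a twist being pulled from the loop $\pi_1(\SO(3))=\Z/2$). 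When $M$ is irreducible every embedded sphere bounds a ball, so each such twist extends over the ball and is trivial by $\pi_0\Diff(B^3\ \mathrm{rel}\ \partial)=0$; hence $\Phi(K)$ vanishes and $\Phi([f_\#])$ is the unique $\delta$-move diffeomorphism, as claimed.

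The main obstacle is precisely step (ii): controlling $\pi_1$ of the space of spheres and its image under $\Phi$. The delicacy is that slides of $S$ along loops of $M_\#$ can, before $D$ is fixed, contribute mapping classes of infinite order (the slide phenomenon illustrated before the theorem), so one must show that fixing the isotopy class of $D$ rel $\delta$ removes all such infinite-order freedom and leaves only the order-two sphere twists. This separation of ``slide'' from ``twist'' contributions, and the assertion that the residual twists form a fixed disjoint sphere family, is the content that the Hatcher--McCullough machinery supplies; the remaining points (the descent being well defined via the Smale conjecture, and uniqueness of the nonseparating sphere via Laudenbach) are comparatively routine.
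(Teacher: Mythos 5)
Your overall strategy is the same as the paper's: descend the twist $f_\#$ on $M_\#=M\#\,S^1\times S^2$ through surgery on the sphere $S$, identify the ambiguity coming from the choice of isotopy of $f_\#(S)$ back to $S$ with the kernel of $\pi_0(\Diff(M_\#,S))\to\pi_0(\Diff(M_\#))$ (your fibration formulation is equivalent to the paper's direct one, since two isotopies differ by a pair diffeomorphism that is isotopic to the identity after forgetting $S$), invoke Hatcher--McCullough to reduce that kernel to sphere twists, and kill those twists when $M$ is irreducible because every sphere bounds a ball. However, there is a genuine gap in your irreducible case: the theorem asserts that the \emph{diagram alone} determines a unique diffeomorphism, whereas the $\delta$-move construction requires choosing a spanning disk $D$, and your steps (i) and (ii) fix one $D$ throughout and control only the choice of isotopy. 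You never argue that two different disks spanning $\delta$ yield isotopic diffeomorphisms. The paper closes this in one line: in an irreducible $3$-manifold the embedded disk spanning $\delta$ is unique up to isotopy rel boundary (innermost-circle argument plus irreducibility), so different choices of $D$ give isotopic spheres $S$ and hence isotopic descents. This point cannot be waved away, since the choice of $D$ is exactly the freedom that produces infinite-order ambiguity in the reducible case (the slide example preceding the theorem), so some appeal to irreducibility is unavoidable here.

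Separately, the step you yourself flag as the main obstacle is asserted rather than proved: you credit unspecified ``structural and finiteness results'' of \cite{HM}. The needed input exists, but it is two specific results, and the attribution matters. First, \cite[Lemma~3.4]{HM}, applied with $n=0$ and $S_0=S$, states exactly that a diffeomorphism of the pair $(M_\#,S)$ that is isotopic to the identity in $M_\#$ is, up to pair isotopy, a composite of twists on spheres in the manifold obtained by cutting $M_\#$ along $S$; capping the two boundary spheres with balls turns these into sphere twists of $M$. Second, the claim that these generate a subgroup of order-$2$ elements coming from twists on a \emph{fixed disjoint} collection of spheres is not in \cite{HM} at all: it is McCullough \cite[Section~3]{Mc}, who shows the sphere twists generate a normal subgroup $\rot(M)\cong\oplus_r\Z_2$ with generators the sum spheres and surgery spheres of a prime decomposition. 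With the disk-uniqueness point added and these two citations made precise, your argument coincides with the paper's proof.
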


\begin{proof}
Given a $\delta$-move diagram and a fixed choice of spanning disk $D\subset M$ for $\delta$, let $S\subset M_\#$ be the associated surgery sphere. Given two isotopies of $f_\#(S)$ to $S$ in $M_\#$, we wish to relate the corresponding diffeomorphisms of $M$. Before the surgery is reversed, these are related by composition with a diffeomorphism of the pair $(M_\#,S)$ that is isotopic (not preserving $S$) to the identity. By \cite[Lemma~3.4]{HM} (with $n=0$ and $S_0=S$), such a diffeomorphism, up to isotopy, comes from a composite of sphere twists on the manifold $M_1$ made by cutting $M_\#$ along $S$. We reverse the surgery by capping off the boundary components of $M_1$ with balls. If $M$ is irreducible, the spheres in question all bound balls in $M$, so their twists are isotopic to the identity. Otherwise, McCullough \cite[Section~3]{Mc} shows that the sphere twists of $M$ generate a normal subgroup $\rot(M)$ of $\pi_0(\Diff_+(M))$ isomorphic to $\oplus_r\Z_2$ for some finite $r$. We can surger $M$ on 2-spheres to get a connected sum of irreducible manifolds, and for any such presentation, the sum spheres and surgery spheres together can be assumed disjoint and comprise a generating set for $\rot(M)$. (Thus, $r$ is at most the number of prime summands of $M$, with equality only when $M=\#r S^1\times S^2$.) The reducible case of the theorem follows immediately, since any isotopy of $D$ rel boundary results in an isotopy of the corresponding diffeomorphisms of $M$. For the remaining case, suppose $M$ is irreducible. Existence follows since $S$ lies in the unique isotopy class of nonseparating spheres in $M_\#$, and uniqueness follows since the disk spanning $\delta$ in $M$ is unique up to isotopy rel boundary.
\end{proof}

Because of the difficulty of tracking isotopy classes of spanning disks in diagrams, it is natural either to assume that $M$ is irreducible or to allow the spanning disk to vary. A $\delta$-move diagram may represent more than one diffeomorphism in the reducible case (although the curve $\delta$ itself is held fixed by the diagram). We show that under broad hypotheses, every diagram represents a diffeomorphism, which can be taken to be a torus or Klein bottle twist.

\begin{thm}\label{equivalent} Every $\delta$-move diagram represents a $\delta$-move that is isotopic to a torus or Klein bottle twist parallel to $C$, provided that the 3-manifold $M$ has no $\R P^3$ summand, or that the diagram is orientable or incompressible. (The case of a Klein bottle only arises if the diagram is nonorientable.) If the diagram is compressible, the resulting twist is isotopic to the identity, provided that the diagram is orientable or $M$ is irreducible (and not $\R P^3$).
\end{thm}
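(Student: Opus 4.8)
The plan is to reduce a $\delta$-move to an honest surface twist by analyzing the torus or Klein bottle $T$ obtained by capping off $T_0$ with the 2-handle along $\delta$. The starting observation is that, by the construction in the definition, the $\delta$-move diffeomorphism $f_\#$ on $M_\#=M\#S^1\times S^2$ is \emph{literally} a twist on $T$ parallel to $C$. The whole difficulty is transferring this from $M_\#$ back to $M$ via the surgery that kills the $S^1\times S^2$ summand, i.e., controlling the surgery sphere $S$ (equivalently the spanning disk $D$ capping $\delta$) so that $f_\#$ preserves $S$ up to isotopy and descends to a genuine twist on an embedded surface in $M$. By Theorem~\ref{unique}, once we fix $D$ the descent is determined up to sphere twists, so the real content is (i) arranging $D$ to be disjoint from $\inter T_0$, which makes $T$ embed in $M$ and lets us cancel the $2$--$3$ handle pair to recover a surface twist directly, and (ii) identifying which surface, and when it is compressible, what the resulting twist is.

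\medskip\noindent\textbf{First} I would attack the embeddedness/disjointness step using $3$--manifold theory, exactly as foreshadowed in the remark after Proposition~\ref{fk}. The curve $\delta$ bounds the disk $D$ in $M$ and also the punctured surface $T_0$; I want to isotope $D$ rel $\partial$ so that $\inter D$ is disjoint from $\inter T_0$. Put $D$ and $T_0$ in general position and consider $\inter D\cap\inter T_0$, a collection of circles and arcs. One removes arcs and innermost circles by the standard innermost-disk / outermost-arc exchanges, using irreducibility of $M$ to fill in spheres created along the way. When $M$ is irreducible this is routine and yields an embedded capped surface $T$, reducing the $\delta$-move to a surface twist by the 2--3 cancellation of Proposition~\ref{TtoDelta}. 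The hypothesis ``no $\mathbb{R}P^3$ summand, or orientable, or incompressible'' is designed to handle the reducible case: by Theorem~\ref{unique} the ambiguity in descent lives in $\rot(M)\cong\oplus_r\Z_2$, generated by twists on sum spheres and surgery spheres. I would show that these potential $\Z_2$ ambiguities can be absorbed into, or shown to equal, a twist on an embedded $T$ precisely under those hypotheses—an $\mathbb{R}P^3$ summand is the obstruction because it is the one prime piece whose sphere twist is genuinely nontrivial and not realized by our surface twist.

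\medskip\noindent\textbf{For the compressible case,} the statement is that the twist is isotopic to the identity (when orientable, or when $M$ is irreducible and not $\mathbb{R}P^3$). Here I invoke the dichotomy already recorded in Section~\ref{TorusTwists}: for an irreducible $M$, a compressible torus either lies in a ball or bounds a solid torus over which the twisting action extends, so every twist on it is isotopic to the identity. Since compressibility of the diagram gives a compressing disk $d$ for $T_0$ meeting it in an essential, non-boundary-parallel circle, the capped surface $T$ is compressible in $M$; I would then surger $T$ along $d$ and apply this dichotomy. The orientable clause without irreducibility requires noting that a compressible torus twist factors through the reducible structure in a controlled way, and that the nonorientable ($\mathbb{R}P^3$) obstruction does not arise. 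For the Klein bottle case I use the final paragraph of Section~\ref{TorusTwists}: a compressing disk for $K$ is bounded by an orientation-preserving loop, and the twist over the bounded region extends.

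\medskip\noindent\textbf{The main obstacle} I expect is the reducible, nonorientable, compressible interface—precisely where the $\mathbb{R}P^3$ hypothesis is being leveraged. Making the disjointness argument for $D$ and $T_0$ survive connected-sum decompositions, while simultaneously keeping track of the $\rot(M)$ ambiguity from Theorem~\ref{unique} and showing it collapses under the stated hypotheses, is the delicate part; a sphere twist on an $\mathbb{R}P^3$ summand is exactly the order-two diffeomorphism that cannot be realized as a twist on an embedded torus or Klein bottle, which is why that summand must be excluded. I would organize the proof to isolate this by first treating irreducible $M$ cleanly, then handling the prime decomposition summand-by-summand and checking that each nonorientable contribution other than $\mathbb{R}P^3$ is either trivial or matches a Klein bottle twist.
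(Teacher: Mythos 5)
Your skeleton (isotope the spanning disk $D$ off $\inter T_0$, cap to an embedded surface, quote Proposition~\ref{TtoDelta}, then treat compressibility separately) matches the paper's, but the way you execute the two key steps has genuine gaps. First, you make the disjointness of $D$ from $\inter T_0$ depend on irreducibility of $M$ (``using irreducibility of $M$ to fill in spheres created along the way''), and then try to recover the reducible case from the $\rot(M)\cong\oplus_r\Z_2$ ambiguity of Theorem~\ref{unique}. That is the wrong tool: the sphere-twist ambiguity in Theorem~\ref{unique} concerns the descent of $f_\#$ from $M_\#$ to $M$ for a \emph{fixed} $D$, and it cannot manufacture an isotopy of $D$ off $T_0$. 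The paper needs no irreducibility here at all: since $D$ and $T_0$ share the boundary $\delta$ and induce the same framing on it, their interiors meet only in circles (no arcs, contrary to your setup), and one removes these by pure surface cut-and-paste --- replace subdisks of $D$ by innermost disks of $T_0$, and handle circles that are boundary-parallel in $T_0$ by splicing in a collar annulus. This works in arbitrary $M$, which is exactly why the theorem does not hypothesize irreducibility. The only configuration this process cannot remove is an innermost disk $d\subset D$ that is a genuine compressing disk for $T_0$, i.e.\ the compressible case.

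Second, you misidentify what the $\R P^3$ hypothesis is for, and your compressible case does not close. In the paper, ``no $\R P^3$ summand'' is used only in the nonorientable compressible case, to rule out $\partial d$ bounding a M\"obius band in $T_0$ (which would give an embedded projective plane, hence an $\R P^3$ summand); it has nothing to do with sphere twists on $\R P^3$ summands being exotic elements of $\rot(M)$ --- indeed your claim that a sphere twist ``cannot be realized as a twist on an embedded torus'' is backwards, since the paper converts sphere twists into torus twists by tubing the poles of the sphere (that is precisely how the nonorientable compressible case still yields a \emph{torus} twist when $M$ is reducible). Once the M\"obius band is excluded, $\partial d$ must be the curve $C$, and the paper isotopes $f_\#$ to a twist on the sphere obtained by surgering $T_0$ along $d$, then either tubes (general $M$) or caps with a ball (irreducible $M$, giving the identity). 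For the orientable compressible case your appeal to the Section~\ref{TorusTwists} dichotomy again assumes irreducibility, whereas the theorem asserts triviality for \emph{all} $M$ when the diagram is orientable; the paper gets this by surgering a parallel copy of $T_0$ along $d$ to produce the spanning disk, so that the capped torus visibly bounds a solid torus and the twist extends --- no ambient hypotheses needed. (Note also that this parallel-copy trick is unavailable in the nonorientable case, since a punctured Klein bottle in an orientable $M$ has nontrivial normal bundle, which is why the paper ``changes tactics'' there; your proposal does not register this distinction.) Your phrase ``factors through the reducible structure in a controlled way'' is a placeholder, not an argument, and it sits precisely where the proof has to do real work.
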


\begin{cor}\label{Hsphere} Every $\delta$-move diagram for a homology sphere $M$ represents a $\delta$-move that is isotopic to a torus twist. If $M$ is also irreducible, then $\delta$-moves and torus twists comprise the same subset of $\pi_0(\Diff_+(M))$. \qed
\end{cor}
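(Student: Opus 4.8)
The plan is to derive Corollary~\ref{Hsphere} directly from Theorem~\ref{equivalent} by checking that homology spheres satisfy the relevant hypotheses, and then to handle the converse inclusion using Proposition~\ref{TtoDelta}. For the first sentence, I would observe that a homology sphere $M$ has trivial first homology, so in particular it has no $\R P^3$ summand (indeed no $S^1\times S^2$ summand and no lens-space summand other than $S^3$, since such summands would contribute torsion or free rank to $H_1$). This places us squarely in the first hypothesis of Theorem~\ref{equivalent} (``$M$ has no $\R P^3$ summand''), so every $\delta$-move diagram for $M$ represents a $\delta$-move isotopic to a torus or Klein bottle twist parallel to $C$. To rule out the Klein bottle case, I would invoke the remark from the start of Section~\ref{Twist}: a Klein bottle twist only arises when $M$ contains the orientable $I$-bundle over a Klein bottle, whose boundary torus carries homology that would force $H_1(M)$ to be nontrivial; equivalently, as noted there, this phenomenon ``does not occur for homology spheres, so there can be no Klein bottle cork twists.'' Hence the twist must be a genuine torus twist, giving the first sentence.

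For the second sentence, assume in addition that $M$ is irreducible. The inclusion of $\delta$-moves into torus twists (as subsets of $\pi_0(\Diff_+(M))$) is exactly what the first sentence delivers: each $\delta$-move diagram yields a diffeomorphism isotopic to a torus twist, and by Theorem~\ref{unique} (irreducible case) this diffeomorphism is unique up to isotopy, so the $\delta$-move is a well-defined element of $\pi_0(\Diff_+(M))$ lying in the set of torus twists. The reverse inclusion, that every torus twist is realized by some $\delta$-move, is precisely Proposition~\ref{TtoDelta}, which shows every torus twist is isotopic to a $\delta$-move (with orientable diagram). Combining the two inclusions gives equality of the two subsets.

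The main subtlety, rather than a genuine obstacle, is the well-definedness bookkeeping that lets one speak of ``the same subset of $\pi_0(\Diff_+(M))$'' at all. A $\delta$-move a priori depends on auxiliary choices (the band $b$, the spanning disk $D$, and an isotopy returning the surgery sphere to position), and Theorem~\ref{unique} is exactly the tool that collapses these ambiguities: for irreducible $M$ the diffeomorphism is unique up to isotopy, so the assignment ``diagram $\mapsto$ class in $\pi_0(\Diff_+(M))$'' is genuinely well-defined and the set-theoretic comparison in the corollary makes sense. I would therefore be careful to cite Theorem~\ref{unique} for the uniqueness in the irreducible case and Theorem~\ref{equivalent} for membership in the torus-twist set, with Proposition~\ref{TtoDelta} supplying the opposite containment; the homology-sphere hypothesis enters only to discharge the ``no $\R P^3$ summand'' condition and to eliminate the nonorientable (Klein bottle) alternative.
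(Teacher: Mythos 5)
Your proof is correct and is essentially the paper's own argument: the corollary carries a \qed precisely because it follows immediately from Theorem~\ref{equivalent} (the no-$\R P^3$-summand hypothesis is automatic for a homology sphere, and the Klein bottle alternative is excluded by the observation at the start of Section~\ref{Twist} that homology spheres contain no embedded Klein bottles), with Proposition~\ref{TtoDelta} giving the reverse containment and Theorem~\ref{unique} supplying the well-definedness needed to speak of $\delta$-moves as a subset of $\pi_0(\Diff_+(M))$ in the irreducible case. Your bookkeeping of these three ingredients matches the paper's intended deduction exactly.
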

 
\begin{proof}[Proof of Theorem~\ref{equivalent}.] We begin with a $\delta$-move diagram, whose curve $\delta$ bounds an embedded disk $D\subset M$ by definition. We wish to modify $D$ so that its interior becomes disjoint from $T_0$. Then $T_0\cup D$ is an embedded torus or Klein bottle, and the proof of Proposition~\ref{TtoDelta} shows that the resulting twist is realized up to isotopy by the diagram. Recall that the surfaces $T_0$ and $D$ induce the same framing on their common boundary $\delta$, so we can assume their interiors intersect in a finite collection of circles. We can eliminate all circles bounding disks in $T_0$ by successively replacing disks in $D$ by innermost disks in $T_0$. If any innermost circle of $D$ is then boundary-parallel in $T_0$, the required new version of $D$ is obtained by joining the corresponding innermost disk to an annulus parallel to a boundary collar of $T_0$. Otherwise, either there are no remaining circles and we are done, or an innermost disk $d$ of $D$ exhibits the diagram as compressible. In the latter case, if $T_0$ is orientable, the required disk is obtained by surgering a parallel copy of $T_0$ along $d$. The resulting torus $T$ is exhibited as the boundary of a solid torus. Thus, the diagram represents a twist on the boundary of a solid torus, which is in turn isotopic to the identity. If $T_0$ is nonorientable, $\partial d$ cannot bound a M\"obius band in $T_0$, or else we could construct an embedded projective plane, whose tubular neighborhood would be an $\R P^3$ summand violating our hypotheses. Thus, $\partial d$ is the unique nonseparating circle in $T_0$ with orientable complement, namely the circle $C$ generating the $\delta$-move. Now we change tactics, modifying $T_0$: An isotopy of $M$ rotating $d$ by a full turn untwists the $\delta$-move near $C$ at the expense of adding twists on a pair of parallel copies of $d$. This isotopes the Klein bottle twist $f_\#$ in $M_\#$ to a twist on the sphere $S^*\subset M_\#$ made from $T_0$ by surgering on $d$. The disk $D$, and hence the surgery sphere $S$, can easily be made disjoint from $S^*$, so that they are not moved by the sphere twist in $M_\#$. Thus, $f_\#$ only changes $S$ by an isotopy. It follows immediately that the original diagram, together with this $S$ (or $D$) and isotopy, determines a $\delta$-move, and it is isotopic to the twist on the sphere in $M$ descending from $S^*$ by surgery on $S$. We can further surger this sphere in $M$ along a tube connecting its poles, obtaining a torus twist. Alternatively, if $M$ is irreducible, the sphere bounds a ball over which the twist extends, so the twist is isotopic to the identity.
\end{proof}

The proof also gives a more general result about the compressible case:

\begin{cor}  If $M$ has no $\R P^3$ summand, then every compressible $\delta$-move diagram represents an element of order at most 2 in $\pi_0(\Diff_+(M))$ (that is the identity in the orientable case).
\end{cor}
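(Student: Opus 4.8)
The plan is to revisit the proof of Theorem~\ref{equivalent} in the compressible case and to observe that, once we replace the hypothesis ``orientable or $M$ irreducible'' by the single hypothesis that $M$ has no $\R P^3$ summand, the very same construction already yields the sharper ``order at most $2$'' conclusion. That proof produces an innermost compressing disk $d\subset D$ for $T_0$ and divides into two subcases according to the orientability of $T_0$, which I would treat in turn.

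In the orientable subcase nothing changes and no irreducibility is used: surgering a parallel copy of $T_0$ along $d$ produces an embedded torus $T$ bounding a solid torus, so the $\delta$-move is the twist on the boundary of that solid torus and is isotopic to the identity. Since this $D$ is disjoint from $\inter T_0$, the move is realized as an honest torus twist with no residual sphere-twist indeterminacy, giving the identity exactly; this accounts for the parenthetical assertion.

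In the nonorientable subcase I would invoke the no-$\R P^3$-summand hypothesis precisely as in the theorem to exclude the possibility that $\partial d$ bounds a M\"obius band in $T_0$, which forces $\partial d$ to be the generating circle $C$. Rotating $d$ through a full turn then isotopes the Klein bottle twist $f_\#$ on $M_\#$ to a twist on the sphere $S^*$ obtained by surgering $T_0$ along $d$; taking the surgery sphere $S$ disjoint from $S^*$, this descends to a twist on a $2$-sphere in $M$. By the observation at the start of Section~\ref{Twist} that sphere twists have order at most $2$ in $\pi_0(\Diff_+(M))$ (arising from $\pi_1(\SO(3))=\Z/2$), the resulting $\delta$-move has order at most $2$. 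Combining the two subcases gives the stated dichotomy.

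The step I expect to be the crux --- and the only place where the corollary genuinely improves on Theorem~\ref{equivalent} --- is the reducible nonorientable case. Here the descending $2$-sphere need \emph{not} bound a ball, so, unlike the irreducible situation of the theorem, the sphere twist need not be trivial; one can only conclude it has order at most $2$. I would also note that this survives the sphere-twist ambiguity inherent in a $\delta$-move on a reducible $M$ (Theorem~\ref{unique}): every sphere twist, and hence every product of sphere twists coming from the indeterminacy, lies in $\rot(M)\cong\oplus_r\Z_2$, which is an elementary abelian $2$-group, so every diffeomorphism the diagram represents is of order at most $2$.
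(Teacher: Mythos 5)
Your proof is correct and follows the paper's approach exactly: the paper's own proof is just the two-line observation that the orientable case is the compressible--orientable branch of Theorem~\ref{equivalent} (where, as you note, no irreducibility is used), while the nonorientable branch of that same proof reduces the move to a twist on an embedded sphere, which has order at most $2$ since $\pi_1(\SO(3))=\Z/2$.

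One caution about your closing paragraph. The sphere-twist indeterminacy of Theorem~\ref{unique} is relative to a \emph{fixed} spanning disk $D$ (up to isotopy rel boundary), and only within that scope is your argument via $\rot(M)\cong\oplus_r\Z_2$ valid. If $D$ is allowed to vary, the assertion that \emph{every} diffeomorphism the diagram represents has order at most $2$ fails: the paper's own slide-diffeomorphism example (a torus bounding a solid torus, with a connected sum taken inside that solid torus) produces, from a compressible orientable diagram with a bad choice of disk, a representative of infinite order in $\pi_0(\Diff_+(M'))$, detected on $\pi_2(M')$. So the corollary, like your main argument, must be read as existential over the choice of $D$ rather than universal over all data the diagram admits.
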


\begin{proof} The orientable case is given by the theorem.  Its proof shows that the nonorientable case can be reduced to a twist on a sphere.
\end{proof}

\begin{cor}  Suppose $M$ is atoroidal with no $\R P^3$ summand. Then every $\delta$-move diagram represents an element of order at most 2 in $\pi_0(\Diff_+(M))$. If the diagram is orientable or $M$ is irreducible, it represents the identity.
\end{cor}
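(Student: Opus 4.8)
The plan is to bootstrap directly from Theorem~\ref{equivalent}, which already classifies every $\delta$-move diagram as (isotopic to) a torus or Klein bottle twist, and then to exploit the atoroidal hypothesis to pin down the isotopy class of the underlying surface. First I would invoke Theorem~\ref{equivalent}: since $M$ has no $\R P^3$ summand, the given $\delta$-move diagram represents a $\delta$-move isotopic to a twist on an embedded torus or Klein bottle $T$ parallel to $C$. The whole game is then to understand twists on such surfaces inside an \emph{atoroidal} manifold, which severely restricts the surface $T$.

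Next I would split according to whether $T$ is orientable (a torus) or nonorientable (a Klein bottle), and according to whether $T$ is incompressible or compressible. If $T$ is an incompressible torus, atoroidality forces $T$ to be inessential, i.e. either boundary-parallel or compressible after all; in a closed atoroidal manifold there are no incompressible tori, so this case cannot genuinely occur, or equivalently $T$ must be compressible. A compressible torus either lies in a ball or bounds a solid torus, and in both situations the twist parallel to $C$ is isotopic to the identity (this is exactly the reasoning recalled in Section~\ref{TorusTwists} and reused in the compressible case of Theorem~\ref{equivalent}). For the Klein bottle case, note that a Klein bottle $K$ has a tubular neighborhood bounded by a torus double covering it; if $K$ were incompressible this boundary torus would be an incompressible torus in $M$, contradicting atoroidality, so $K$ must be compressible. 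Then the compressing disk meets $K$ in the unique nonseparating orientation-preserving circle $C$, and the argument in the nonorientable compressible case of Theorem~\ref{equivalent} applies verbatim: an isotopy untwisting near $C$ converts the Klein bottle twist into a twist on an embedded sphere $S^*$ in $M_\#$, which descends to a twist on a sphere in $M$.

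It then remains to dispose of the sphere twist. A twist on a separating sphere bounding a ball is isotopic to the identity; a twist on any sphere has order at most $2$ in $\pi_0(\Diff_+(M))$ because $\pi_1(\SO(3))=\Z/2$, as recorded at the start of Section~\ref{Twist}. This gives the first assertion: every $\delta$-move diagram represents an element of order at most $2$. For the refined statement, if the diagram is orientable then $T$ is a torus, the preceding paragraph shows the twist is isotopic to the identity (no sphere twist is produced in the orientable compressible branch), so we get the identity. If instead $M$ is irreducible, then any embedded sphere bounds a ball, so the sphere twist extends over that ball and is isotopic to the identity; combined with the orientable conclusion this again yields the identity in both subcases.

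The main obstacle I anticipate is not any single computation but rather the bookkeeping of cases together with the verification that atoroidality genuinely eliminates the incompressible alternatives. Concretely, the subtle point is ruling out a \emph{nonperipheral} incompressible torus or Klein bottle: one must argue that in a manifold with no essential tori, the surface $T$ arising from Theorem~\ref{equivalent} is forced to be compressible (or boundary-parallel), so that the twist collapses to a sphere twist or the identity. Once that reduction is secured, the order-$\le 2$ bound and the sharper identity conclusions follow mechanically from the structure already established for sphere twists and for twists on boundaries of balls and solid tori. I would therefore keep the proof short, citing Theorem~\ref{equivalent} for the main reduction and the opening remarks of Section~\ref{Twist} for the sphere-twist facts, and devote the genuine content to the one-line observations that atoroidality kills the incompressible cases.
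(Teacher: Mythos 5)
Your overall reduction---invoke Theorem~\ref{equivalent}, then use atoroidality to kill incompressible tori and Klein bottles (via the double-covering boundary torus of a Klein bottle neighborhood)---is in the spirit of the paper's proof, and your handling of the Klein bottle case and of irreducible $M$ is fine. But there is a genuine gap in the torus case: you dispose of a compressible torus with the dichotomy ``lies in a ball or bounds a solid torus, so the twist parallel to $C$ is isotopic to the identity.'' That dichotomy is valid only when $M$ is irreducible, as Section~\ref{TorusTwists} explicitly warns (``Note that irreducibility is necessary''), and the hypotheses here do not give irreducibility: $M$ is merely atoroidal with no $\R P^3$ summand, which allows reducible manifolds such as $(S^1\times S^2)\,\#\,L(3,1)$. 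On a reducible $M$, a twist on a compressible torus can be a nontrivial slide diffeomorphism, even of infinite order in $\pi_0(\Diff_+(M))$ (see the discussion preceding Theorem~\ref{unique}). So your argument breaks down precisely for orientable diagrams on reducible $M$, and that case is needed both for the order-at-most-2 assertion and for the ``orientable implies identity'' assertion. Your parenthetical claim that this dichotomy is what the compressible case of Theorem~\ref{equivalent} uses is also not accurate---that proof deliberately avoids it.

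The failure comes from treating the torus supplied by the \emph{statement} of Theorem~\ref{equivalent} as a black box: compressibility of that torus alone cannot force its twist to be trivial. What actually saves the orientable compressible case, inside the proof of the theorem, is that the torus there is not merely compressible but is constructed as the boundary of an explicit solid torus (a neighborhood of $T_0\cup d$, where $d$ is a compressing disk for the diagram), and a twist on the boundary of a solid torus is isotopic to the identity with no irreducibility hypothesis. Accordingly, the paper argues at the level of diagrams rather than closed surfaces: since the proof of Theorem~\ref{equivalent} would produce an incompressible torus or Klein bottle from any \emph{incompressible} diagram, atoroidality forces every diagram to be compressible; the first assertion is then the preceding corollary (compressible diagrams give elements of order at most $2$ when there is no $\R P^3$ summand), and the second is the compressible clause in the statement of Theorem~\ref{equivalent} (identity when the diagram is orientable or $M$ is irreducible). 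To repair your proof you would need to establish that bridge from atoroidality to compressibility of the \emph{diagram} and then quote those results, or else rerun the solid-torus construction; the shortcut through ``compressible torus implies trivial twist'' is false as stated.
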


\begin{proof} By definition, $M$ has no incompressible tori, and hence no incompressible Klein bottles. The proof of Theorem~\ref{equivalent} generates such a surface from any incompressible $\delta$-move diagram, so the previous corollary applies. The last sentence follows from the statement of the theorem.
\end{proof}

Akbulut's motivation for introducing $\delta$-moves was to study cork twisting by diagrams as in Section~\ref{Notes}, starting from a pair $Y\subset X$ and regluing $Y$. This raises a subtle technical issue. From the viewpoint of the definition of $\delta$-moves, the issue centers on the isotopy from $f_\#(S)$ to $S$ for surgery reversal. We start with a link diagram of $M=\partial Y$, and then add additional handles along a framed link $L'\subset M$ to get $X$. Given a $\delta$-move diagram for $M$, we must understand the effects of a resulting move on $L'$. To introduce the canceling 2-3 handle pair without moving $L'$, this link must be disjoint from the disk $D$ where the 3-handle attaches, a condition that can be routinely checked. The effects of the resulting torus twist $f_\#$ on $L'$ in $M_\#$ are easy to see: We can assume that $L'$ intersects the torus only in the annulus $A$ in the diagram, and then $L'$ is only changed at these intersections, by twisting parallel to $C$. However, we must then compose $f_\#$ with a diffeomorphism $g$ (isotopic to $\id_{M_\#}$) returning $S$ to its original position for surgery. Note that $g$ can be complicated; for example, any sheets of $S$ intersecting $T_0$ in parallel copies of $\beta$ will be dragged by $f_\#$ over the 2-handle attached to $\delta$. The effect of $g$ on $f_\#(L')$ is an unspecified isotopy in $M_\#$ that could cause band-summing with parallel copies of $\delta$. Reversing this isotopy could cause intersections of the link with $S$ that would prevent it from surviving the surgery. Thus, it is not clear  how the $\delta$-move affects the auxiliary link $L'$ without a more careful analysis.

An analogous problem arises from the viewpoint of expressing $\delta$-moves as torus twists. If we start from a $\delta$-move diagram, we can see where $T_0$ intersects $L'$. Given a procedure for seeing that $\delta$ is unknotted in $M$, it is routine to verify if the resulting disk $D$ is disjoint from $L'$. If $D$ is not directly visible in the diagram, however, we must assume it intersects $\inter T_0$ and apply the method of Theorem~\ref{equivalent}. This replaces $D$ by a new disk $D'$ disjoint from $\inter T_0$, and it is not generally clear whether $D'$ intersects $L'$. Since $D'$ is constructed in a neighborhood of $T_0\cup D$, it avoids every link component disjoint from $T_0$. However, the diffeomorphism is only interesting when $L'$ has nontrivial intersection with $T_0$, in which case further analysis is needed to determine whether $f$ causes unexpected movement of $L'$. This is why we exhibited $L'$ and the entire torus $T$ simultaneously in the same diagram for Proposition~\ref{fk}. As mentioned there, there are other approaches, notably drawing $T$ as an isotopy of a circle and applying Figure~\ref{alpha}.


\begin{thebibliography}{MM}

\bibitem[A1]{A}
S.\ Akbulut,
{\em An exotic 4-manifold},
J.\ Differential Geom.\ {\bf33} (1991), 357--361.

\bibitem[A2]{AkScharl}
S.\ Akbulut,
{\em Scharlemann's manifold is standard},
Ann.\ Math.\ {\bf149} (1999), 497--510.

\bibitem[A3]{withdrawn}
S.\ Akbulut, {\em An infinite order cork},
arXiv:1408.3200v1 (withdrawn).

\bibitem[A4]{Acork}
S.\ Akbulut,
{\em On infinite order corks},
arXiv:1605.09348v4 .

\bibitem[AK]{AK}
S.\ Akbulut and R.\ Kirby,
{\em Branched covers of surfaces in 4-manifolds},
Math.\ Ann.\ {\bf252} (1980), 111--131.

\bibitem[AM]{AM}
S.\ Akbulut and R.\ Matveyev,
{\em A convex decomposition theorem for four-manifolds},
Internat.\ Math.\ Res.\ Notices
{\bf 1998} (1998), 371--381.

\bibitem[AKMR]{AKMR}
D. Auckly, H. Kim, P. Melvin and D. Ruberman,
{\em Equivariant corks},
arXiv:1602.07650 .

\bibitem[CFHS]{CFHS}
C.\ Curtis, M.\ Freedman, W.\ Hsiang and R.\ Stong,
{\em A decomposition theorem for h-cobordant smooth simply connected compact 4-manifolds},
Invent.\ Math.\ {\bf123} (1996), 343--348.

\bibitem[FS1]{FS}
R.\ Fintushel and R.\ Stern,
{\em Knots, links and 4-manifolds},
Invent.\ Math.\ {\bf134} (1998), 363--400.

\bibitem[FS2]{FS2}
R.\ Fintushel and R.\ Stern,
{\em Double node neighborhoods and families of simply connected 4-manifolds with $b^+=1$},
J.\ Amer.\ Math.\ Soc.\ {\bf19} (2006), 171--180.

\bibitem[F]{F}
M.\ Freedman,
{\em The topology of four-dimensional manifolds},
J.\ Differential Geom.\ {\bf 17} (1982),  357--453.


\bibitem[G1]{CS}
R.\ Gompf,
{\em More Cappell-Shaneson spheres are standard},
Algebr.\ Geom.\ Topol.\ {\bf 10} (2010),  1665--1681.

\bibitem[G2]{InfCork}
R.\ Gompf,
{\em Infinite order corks},
Geom.\ Topol., to appear,
arXiv:1603.05090.

\bibitem[GS]{GS}
R.\ Gompf and A.\ Stipsicz,
{\em 4-Manifolds and Kirby Calculus},
Grad.\ Studies in Math.\ {\bf 20},
Amer.\ Math.\ Soc., Providence, 1999.

\bibitem[HMc]{HM}
A.\ Hatcher and D.\ McCullough,
{\em  Finite presentation of 3-manifold mapping class groups},
in Groups of Self-Equivalences and Related Topics, ed.\ R.\ Piccinini, Springer Lecture Notes in Math.\ {\bf 1425} (1990), 48--57.

\bibitem[M]{M}
R.\ Matveyev,
{\em A decomposition of smooth simply-connected h-cobordant 4-manifolds},
J.\ Differential Geom.\ {\bf 44} (1996),  571--582.

\bibitem[Mc]{Mc}
D.\  McCullough,
{\em Twist groups of compact 3-manifolds},
Topology {\bf 24} (1985),  461--474.

\bibitem[RR]{RR}
A.\ Ray and D.\ Ruberman,
{\em 4-Dimensional analogs of Dehn's Lemma},
arXiv:1608.08654 .


\bibitem[T]{T}
M.\ Tange,
{\em Finite order corks},
arXiv:1601.07589 .

\bibitem[Tr]{Tr}
B.\ Trace,
{\em On attaching 3-handles to a 1-connected 4-manifold},
Pacific J.\ Math.\ {\bf 99} (1982),  175--181.

\bibitem[W1]{Wald}
F.\ Waldhausen,
{\em On irreducible 3-manifolds which are sufficiently large},
Ann.\ Math.\  {\bf 87} (1968),  56--88.

\bibitem[W2]{W}
F.\ Waldhausen,
{\em Recent results on sufficiently large 3-manifolds},
in Algebraic and Geometric Topology, Proc.\ Symp.\ Pure Math.\ Vol 32 Part 2, pp 21--38, AMS, 1978.

\end{thebibliography}
\end{document}